\colorlet{examplefill}{yellow!80!black}
\theoremstyle{plain}
\newtheorem{theorem}{Theorem}[section]
\newtheorem{corollary}[theorem]{Corollary}
\newtheorem{proposition}[theorem]{Proposition}
\theoremstyle{definition}
\newtheorem{definition}[theorem]{Definition}
\newtheorem{remark}[theorem]{Remark}
\newtheorem{example}[theorem]{Example}
\newcommand{\Z}{\mathbb{Z}}
\newcommand{\R}{\mathbb{R}}
\newcommand{\C}{\mathbb{C}}
\newcommand{\PP}{\mathbb{P}}
\newcommand{\F}{{\mathbb F}}
\newcommand{\ch}{\operatorname{ch}}
\newcommand{\bch}{\operatorname{bch}}
\newcommand{\uch}{\operatorname{uch}}
\newcommand{\Sep}{\operatorname{Sep}}
\newcommand{\wall}{\operatorname{Wall}}
\newcommand{\A}{{\mathcal A}}
\newcommand{\FF}{{\mathcal F}}
\newcommand{\LL}{{\mathcal L}}
\newcommand{\barH}{\overline{H}}
\newcommand{\barA}{\overline{\A}}
\newcommand{\barC}{\overline{C}}
\newcommand{\barD}{\overline{D}}
\newcommand{\barF}{\overline{F}}
\newcommand{\barX}{\overline{X}}
\newcommand{\barZ}{\overline{Z}}
\newcommand{\barnabla}{\overline{\nabla}}
\newcommand{\id}{\operatorname{id}}
\newcommand{\dense}{{\operatorname{\mathsf D}}_\infty}
\definecolor{deepblue}{cmyk}{0,0.83,1,0.70}
\definecolor{gray}{cmyk}{0,0,0,0.3}
\definecolor{rred}{cmyk}{0,1,1,0}
\definecolor{chairo}{cmyk}{0,0.83,1,0.70}
\definecolor{roypur}{cmyk}{0.75,0.90,0,0.1}
\definecolor{darkorc}{cmyk}{0.40,0.80,0.20,0}
\definecolor{oliv}{cmyk}{0.64,0.00,0.75,0.56}
\definecolor{azuro}{cmyk}{1,1,0,0.46}
\DeclareMathOperator{\image}{im}
\DeclareMathOperator{\codim}{codim}
\title{Vanishing results for the Aomoto complex of real 
hyperplane arrangements via minimality}
\author{Pauline Bailet\thanks{Department of Mathematics, 
University of Bremen, 28344 Bremen, GERMANY 
Email: bailet@uni-bremen.de} and 
Masahiko Yoshinaga\thanks{Department of Mathematics, 
Hokkaido University, 
North 10, West 8, Kita-ku, 
Sapporo 060-0810, 
JAPAN 
E-mail: yoshinaga@math.sci.hokudai.ac.jp}}
\date{\today}
\begin{document}
\maketitle

\begin{abstract}
We prove vanishing results of the cohomology groups of Aomoto complex 
over arbitrary coefficient ring for real hyperplane arrangements. 
The proof is using minimality of arrangements and 
descriptions of Aomoto complex in terms of chambers. 

Our methods also provide a new proof for the vanishing theorem of 
local system cohomology groups which was first proved by 
Cohen, Dimca and Orlik.

\end{abstract}

\tableofcontents

\section{Introduction}

Theory of hypergeometric integrals originated from Gauss has been 
generalized to higher dimensions, which has applications in 
various area of mathematics and physics (\cite{ao-ki, koh-cft, var}). 
In the above generalization, the notion of the local system 
cohomology groups on the complement of a hyperplane arrangement 
plays a crucial role. 

Let $\A=\{H_1, \dots, H_n\}$ be an arrangement of affine hyperplanes in 
$\C^\ell$, $M(\A)=\C^\ell\smallsetminus\bigcup_{H\in\A}H$ be its complement. 
We also fix a defining equation $\alpha_i$ of $H_i$. 
An arrangement $\A$ is called essential if normal vectors of hyperplanes 
generate $\C^\ell$. The first homology group $H_1(M(\A), \Z)$ is  a 
free abelian group generated by the meridians $\gamma_1, \dots, \gamma_n$ 
of hyperplanes. We denote their dual basis by $e_1, \dots, e_n\in 
H^1(M(\A), \Z)$. The element $e_i$ can be identified with 
$\frac{1}{2\pi\sqrt{-1}}d\log\alpha_i$ via the de Rham isomorphism. 

The isomorphism class of a rank one complex local system $\LL$ is 
determined by a homomorphism $\rho:H_1(M(\A), \Z)\longrightarrow
\C^\times$, which is also determined by an $n$-tuple 
$q=(q_1, \dots, q_n)\in(\C^\times)^n$, where 
$q_i=\rho(\gamma_i)$. 

For a generic parameter $(q_1, \dots, q_n)$, it is known that 
the following vanishing result holds. 
\begin{equation}
\label{eq:typicalvanishing}
\dim H^k(M(\A), \LL)=
\left\{
\begin{array}{ll}
0,& \mbox{ if $k\neq \ell$, }\\
&\\
|\chi(M(\A))|,& \mbox{ if $k=\ell$.}
\end{array}
\right.
\end{equation}
Several sufficient conditions for the vanishing (\ref{eq:typicalvanishing}) 
have been known (\cite{ao-ki, koh}). Among others, Cohen, Dimca and Orlik 
(\cite{cdo}) proved the following. 
\begin{theorem}
\label{thm:cdo}
(CDO-type vanishing theorem) Suppose that $q_X\neq 1$ for 
each dense edge $X$ contained in the hyperplane at infinity. 
Then the vanishing (\ref{eq:typicalvanishing}) holds. 
(See \S \ref{subsec:os} below for terminologies). 
\end{theorem}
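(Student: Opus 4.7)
The plan is to reduce Theorem~\ref{thm:cdo} to an algebraic vanishing statement for the Aomoto complex over a suitable coefficient ring, and then to deduce that statement from a combinatorial description of the complex in terms of chambers of an underlying real arrangement. First I would reduce to the case that $\A$ is defined over $\R$: by a deformation that preserves the intersection combinatorics, $H^*(M(\A),\LL)$ for a complex arrangement is identified with that of a real arrangement having the same combinatorial type and the same monodromy parameters, so nothing is lost by assuming $\A$ is real.

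For a real essential arrangement $\A$ in $\R^\ell$, I would invoke the minimal CW decomposition of $M(\A)$, in which the $k$-cells are in bijection with the chambers of $\A$ of a suitable ``type $k$'' with respect to a generic complete flag. In this model the cochain complex computing $H^*(M(\A),\LL)$ is a complex of free modules of rank $b_k(M(\A))$, with differentials that admit an explicit description in terms of chamber adjacency and the monodromies $q_i$. The Aomoto complex is the tangent-level analogue of this, obtained by writing $q_i = 1 + t\lambda_i$ and reading off the first-order term in $t$; by a standard specialization argument (ranks of matrices of power series can only drop on closed subsets), vanishing of Aomoto cohomology in degree $k$ for generic $\lambda$ would imply $H^k(M(\A),\LL)=0$ for a Zariski-open set of local systems. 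The concentration $\dim H^\ell = |\chi(M(\A))|$ then follows from the equality $\sum(-1)^k b_k = (-1)^\ell \chi(M(\A))$ together with minimality.

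It therefore suffices to show that the Aomoto complex, viewed as a chamber-indexed complex of free modules, is exact in degrees $k<\ell$ under the hypothesis of the theorem. I would proceed by induction on $\ell$. Fixing a generic hyperplane $H_0\in\A$ and applying deletion--restriction, one gets a short exact sequence of Aomoto complexes relating $\A$, $\A\smallsetminus\{H_0\}$ and the restriction $\A^{H_0}$, and hence a long exact sequence in cohomology. The chambers lying near infinity in these subarrangements are controlled by the dense edges of $\A$ contained in the hyperplane at infinity, and the hypothesis $q_X\neq 1$ for each such $X$ should be precisely what is needed to make the connecting homomorphisms either vanish or be isomorphisms in the relevant range of degrees, so that the inductive hypothesis can be propagated.

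The main obstacle is combinatorial rather than topological: the hard part will be to extract, from the chamber-level description of the Aomoto differential, a criterion for injectivity that is directly sensitive to the hypothesis about dense edges at infinity. Concretely, one has to recognize that the locus where the Aomoto differential fails to have maximal rank is stratified by dense edges, so that requiring $q_X\neq 1$ for each dense $X$ at infinity rules out exactly the failure modes that could prevent the inductive step. Implementing this identification cleanly, and keeping track of how ``bounded'' versus ``unbounded'' chambers interact as $H_0$ moves to infinity, is where I expect the real work to lie.
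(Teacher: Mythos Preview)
Your proposal has two genuine gaps and diverges from the paper's approach in a way that matters.

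First, the reduction to real arrangements is not valid: there exist complex arrangements (e.g.\ MacLane's $\operatorname{AG}(2,3)$ configuration) whose intersection lattice is not realizable over~$\R$, so no combinatorics-preserving deformation to a real arrangement exists. The paper in fact only claims an alternative proof of Theorem~\ref{thm:cdo} \emph{for real arrangements}, so this is not fatal for matching the paper, but you should not present the reduction as routine.

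Second, and more seriously, your passage from Aomoto vanishing to local-system vanishing via semicontinuity does not deliver the statement you want. Semicontinuity of ranks would give you $H^k(M(\A),\LL)=0$ on \emph{some} Zariski-open set of $(q_1,\dots,q_n)$, but you have no mechanism to identify that open set with the explicit locus $\{q_X\neq 1\ \text{for all dense }X\subset\barH_\infty\}$. The paper does not go through Aomoto-to-local-system transfer at all: it runs the \emph{same} chamber-level argument directly on the twisted minimal complex $(\C[\ch^\bullet(\A)],\nabla_\LL)$, where the role of $\lambda_{\Sep(C,C')}$ is played by $\Delta(C,C')=\prod q_i^{1/2}-\prod q_i^{-1/2}$, and the hypothesis $q_X\neq 1$ enters as the nonvanishing of the diagonal entries.

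Finally, your inductive scheme via deletion--restriction is not the paper's route and is left essentially unspecified; it is unclear that the condition ``$q_X\neq 1$ for dense $X\subset\barH_\infty$'' propagates to $\A\smallsetminus\{H_0\}$ or to $\A^{H_0}$. The paper's argument is instead a direct linear-algebra computation: split $R[\ch^k]=R[\bch^k]\oplus R[\uch^k]$, use the opposite-chamber bijection $\iota:\bch^k\stackrel{\simeq}{\to}\uch^{k+1}$, $C\mapsto C^\lor$, and show that the induced map $\barnabla:R[\bch^k]\to R[\uch^{k+1}]$ is \emph{upper-triangular} once chambers are ordered by $\dim X(C)$. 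The diagonal entry at $C$ is $\pm\lambda_{X(C)}$ (respectively $\pm\Delta(C,C^\lor)$, which vanishes iff $q_{X(C)}=1$), so the hypothesis makes $\barnabla$ invertible and forces the cohomology to vanish below top degree. The hard work---and the part your outline does not anticipate---is the geometric proof of upper-triangularity and the degree formula $\deg(C,C^\lor)=(-1)^{\ell-1-\dim X(C)}$, carried out by constructing explicit vector fields on $F^{\ell-1}$ directed toward the target chamber.
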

The above result is stronger than many other vanishing results. 
Indeed for the case $\ell=2$, it was proved in \cite{y-mini} that 
the vanishing (\ref{eq:typicalvanishing}) with additional property holds 
if and only if the assumption of Theorem \ref{thm:cdo} holds. 

The local system cohomology group $H^k(M(\A), \LL)$ is computed by 
using twisted de Rham complex $(\Omega_{M(\A)}^\bullet, d+\omega\wedge)$, 
with $\omega=\sum\lambda_i d\log\alpha_i$, 
where $\lambda$ is a complex number such that 
$\exp(-2\pi\sqrt{-1}\lambda_i)=q_i$. 
The algebra of rational differential forms $\Omega_{M(\A)}^\bullet$ 
has a natural $\C$-subalgebra $A_{\C}^\bullet(\A)$ 
generated by $e_i=\frac{1}{2\pi\sqrt{-1}}d\log\alpha_i$. This 
subalgebra is known to be isomorphic to the cohomology ring 
$H^\bullet(M(\A), \C)$ of $M(\A)$ (\cite{bri-tress}) 
and having a combinatorial 
description the so-called Orlik-Solomon algebra \cite{O-S} 
(see \S \ref{subsec:os} below for details). 
The Orlik-Solomon algebra provides a subcomplex 
$(A_\C^\bullet(\A), \omega\wedge)$ of the twisted de Rham complex, 
which is called the Aomoto complex. 
There exists a natural morphism 
\begin{equation}
\label{eq:morphism}
(A_\C^\bullet(\A), \omega\wedge)\hookrightarrow 
(\Omega_{M(\A)}^\bullet, d+\omega\wedge)
\end{equation}
of complexes. The Aomoto complex 
$(A_\C^\bullet(\A), \omega\wedge)$ 
has a purely combinatorial description. 
Furthermore, it can be considered as 
a linearization of the twisted de Rham complex 
$(\Omega_{M(\A)}^\bullet, d+\omega\wedge)$. 
Indeed, there exists a Zariski open subset $U\subset(\C^\times)^n$ 
which contains $(1, 1, \dots, 1)\in(\C^\times)^n$ such that 
(\ref{eq:morphism}) is quasi-isomorphic for $q\in U$ (\cite{esv, stv, nty}). 
However, they are not isomorphic in general. 

Vanishing results for the cohomology of the Aomoto complex are 
also proved by Yuzvinsky. 

\begin{theorem}
\label{thm:yuz}
(\cite{yuz, yuz-bos}) 
Let $\omega=\sum_{i=1}^n 2\pi\sqrt{-1}\lambda_ie_i\in A_\C^1(\A)$. 
Suppose $\lambda_X\neq 0$ for all dense edge $X$ in $L(\A)$. 
Then we have 
\begin{equation}
\label{eq:yuz}
\dim H^k(A_\C^\bullet(\A), \omega\wedge)=
\left\{
\begin{array}{ll}
0,& \mbox{ if $k\neq \ell$, }\\
&\\
|\chi(M(\A))|,& \mbox{ if $k=\ell$.}
\end{array}
\right.
\end{equation}
\end{theorem}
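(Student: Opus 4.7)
The plan is to imitate the chamber-based proof of the CDO-type vanishing theorem (Theorem~\ref{thm:cdo}) developed in this paper, but working at the linearization at $q=(1,\dots,1)$. Since $(A_\C^\bullet(\A),\omega\wedge)$ is precisely the infinitesimal analogue of the twisted de~Rham complex at the trivial local system, the combinatorial mechanism that produces vanishing for generic local systems should, once differentiated in $\lambda$, also produce the desired Aomoto vanishing.

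First I would reduce to the case of a real arrangement. The dimensions of the Aomoto cohomology depend only on the underlying matroid and on $\lambda$, and the condition ``$\lambda_X\neq 0$ for every dense $X$'' is Zariski open, so it suffices to verify the conclusion at a single real point where the hypothesis holds. Next, invoking the minimality theorem for real arrangements, I would realize $M(\A)$ as a minimal CW complex whose $k$-cells correspond to bounded chambers in a generic $k$-flag slicing of $\A$. Linearizing the twisted minimal coboundary maps in $\lambda$ then yields an isomorphism of complexes
\begin{equation*}
(A_\C^\bullet(\A),\omega\wedge)\;\cong\;\bigl(C_{\ch}^\bullet(\A),\partial_\lambda\bigr),
\end{equation*}
where $\partial_\lambda$ is a signed, $\lambda$-weighted tangential adjacency operator on chamber groups. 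In this form, $H^k=0$ for $k<\ell$ becomes injectivity of $\partial_\lambda$ in those degrees. I would establish this by induction on $\ell$: moving a hyperplane $H$ to infinity, one splices the chamber complex of $\A$ to those of the restriction $\A^H$ and the deletion $\A\smallsetminus\{H\}$, whose dense edges are subsumed by the dense edges of $\A$; the inductive hypothesis handles the sub-arrangements, and the Euler-characteristic identity then pins the top-degree dimension to $|\chi(M(\A))|$.

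The main obstacle, as in the local-system proof, is the inductive step: producing a deletion-restriction short exact sequence of chamber cochain complexes whose connecting homomorphism is manifestly driven by the scalar $\lambda_H$ (or, more invariantly, by weights attached to edges supported on $H$), and showing that this connecting map is of maximal rank precisely when $\lambda_X\neq 0$ for every dense $X\subset H$. In the local-system setting, genericity of $q$ produces multiplicative cancellations that are relatively easy to track; by contrast the additive weights $\lambda_i$ require a finer combinatorial analysis of the kernel of $\partial_\lambda$, and it is here that the density hypothesis on $L(\A)$ enters essentially. Once this step is in place, the remainder of the argument is a routine application of the long exact sequence and Euler-characteristic bookkeeping.
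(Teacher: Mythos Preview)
The paper does not prove Theorem~\ref{thm:yuz}; it is quoted as Yuzvinsky's result and merely cited to \cite{yuz, yuz-bos}. What the paper actually proves is Theorem~\ref{thmprincipal}, whose hypothesis is complementary: invertibility of $\lambda_X$ is required only for dense edges $X\subset\barH_\infty$, not for the dense edges of the affine lattice $L(\A)$. The paper explicitly remarks on this complementarity right after stating Theorem~\ref{thm:yuz}. So there is no ``paper's own proof'' of this statement to compare against; your proposal should be assessed on its own merits.

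On those merits, two genuine gaps stand out. First, your reduction to real arrangements does not go through. You say the Aomoto cohomology depends only on the matroid and on $\lambda$, which is correct, but you then need a real realization of that matroid in order to invoke chambers and minimality. Matroids realizable over $\C$ are not in general realizable over $\R$ (e.g.\ the MacLane or Hessian matroids), so for such $\A$ your argument never gets started. The Zariski-openness you invoke is openness in $\lambda$ with $\A$ fixed, and that does nothing to change the ground field of the arrangement.

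Second, even granting a real $\A$, the chamber machinery of this paper is built around the edges $X(C)\subset\barH_\infty$ attached to unbounded chambers (Proposition~\ref{densechamber}), and the diagonal entries of $\barnabla_\omega$ are $\pm\lambda_{X(C)}$ for these edges at infinity. Your hypothesis, by contrast, controls $\lambda_X$ for dense $X$ in the \emph{affine} lattice $L(\A)$; these are different edges, and the paper's upper-triangularity argument does not see them. Your alternative, a deletion--restriction short exact sequence of chamber complexes, is plausible in spirit but the crucial claim that the dense edges of $\A^H$ and $\A\smallsetminus\{H\}$ are ``subsumed'' by those of $\A$ is false in general (restriction can create new dense edges), so the inductive hypothesis may fail on the pieces. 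This is exactly the delicate point in Yuzvinsky's algebraic argument, and it is not resolved by the sketch you give.
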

We note that the assumptions in 
Theorem \ref{thm:cdo} 
and 
Theorem \ref{thm:yuz} 
are 
somewhat complementary. 
For the first one requires nonresonant condition 
along the hyperplane at infinity, on the other hand, 
Theorem \ref{thm:yuz} imposes nonresonant condition on all dense edges 
in the affine space. 

Recently, Papadima and Suciu proved that for a torsion local system, 
the dimension of the local system cohomology group is bounded by 
that of Aomoto complex with finite field coefficients. 

\begin{theorem}
\label{thm:ps-sp}
(\cite{PS}) 
Let $p\in\Z$ be a prime. 
Suppose $\omega=\sum_{i=1}^n\lambda_ie_i\in A_{\F_p}^1(\A)$ and 
$\LL$ is the local system determined by 
$q_i=\exp(\frac{2\pi\sqrt{-1}}{p}\lambda_i)$. 
Then 
\begin{equation}
\label{eq:ineqps}
\dim_\C H^k(M(\A), \LL)\leq
\dim_{\F_p} H^k(A_{\F_p}^\bullet(\A), \omega \wedge), 
\end{equation}
for all $k\geq 0$. 
\end{theorem}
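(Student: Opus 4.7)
\emph{Proof sketch.} The plan is to derive the inequality from upper semicontinuity of cohomology applied to a single complex of free modules over a DVR, whose generic fibre computes $H^*(M(\A),\LL)$ and whose special fibre computes $H^*(A_{\F_p}^\bullet(\A),\omega\wedge)$. The crucial input from arrangement topology is the \emph{minimality} of $M(\A)$ (Dimca--Papadima, Randell): there exists a bounded cochain complex $(C^\bullet,\partial^\bullet)$ of free modules over the group ring $R=\Z[H_1(M(\A),\Z)]=\Z[t_1^{\pm 1},\ldots,t_n^{\pm 1}]$ with $\operatorname{rk}_R C^k=b_k(M(\A))$, whose cohomology upon a specialization $t_i\mapsto q_i$ computes the local system cohomology $H^*(M(\A),\LL_q)$. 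Moreover the entries of $\partial^k$ lie in the augmentation ideal $I=(t_1-1,\ldots,t_n-1)$; writing $\partial^k\equiv\sum_i(t_i-1)A_i^k\pmod{I^2}$ with integer matrices $A_i^k$, the linear part reproduces the Aomoto differential in the sense that $\sum_i\lambda_i\overline{A_i^k}=\mu_\omega$ over $\F_p$, where $\mu_\omega$ denotes multiplication by $\omega=\sum_i\lambda_i e_i$ in $A_{\F_p}^\bullet(\A)$.

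Set $\mathcal O=\Z[\mu_p]$ with uniformizer $\pi=\zeta_p-1$ (the unique prime above $p$) and residue field $\mathcal O/\pi=\F_p$, passing to the localization at $\pi$ if a genuine DVR is desired. Fix integer lifts $\tilde\lambda_i\in\Z$ of $\lambda_i\in\F_p$ and specialize via $\varphi:R\to\mathcal O$, $t_i\mapsto\zeta_p^{\tilde\lambda_i}$. The expansion $\zeta_p^{\tilde\lambda_i}-1=(1+\pi)^{\tilde\lambda_i}-1=\tilde\lambda_i\pi+O(\pi^2)$ together with the previous paragraph yields
\[
\varphi(\partial^k)\;=\;\pi\sum_i\tilde\lambda_i A_i^k\;+\;O(\pi^2),
\]
so $\varphi(\partial^k)$ is divisible by $\pi$. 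Define the rescaled cochain complex
\[
\widetilde C^k\;:=\;\pi^{-k}\bigl(C^k\otimes_R\mathcal O\bigr),\qquad
\widetilde\partial^k\;:=\;\pi^{-1}\varphi(\partial^k),
\]
a bounded complex of finitely generated free $\mathcal O$-modules. Multiplication by $\pi^k$ on $\widetilde C^k$ furnishes an isomorphism $\widetilde C^\bullet\otimes_{\mathcal O}\Q(\mu_p)\cong C^\bullet\otimes_R\Q(\mu_p)$ of complexes over the fraction field, so the generic fibre of $\widetilde C^\bullet$ still computes $H^*(M(\A),\LL)$ (with $\dim_{\Q(\mu_p)}=\dim_\C$ since $\LL$ is defined over $\Q(\mu_p)$). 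On the other hand, reduction modulo $\pi$ of the displayed expansion gives $\widetilde\partial^k\equiv\mu_\omega$, so the special fibre computes $H^*(A_{\F_p}^\bullet(\A),\omega\wedge)$.

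The conclusion then follows from classical upper semicontinuity: for any bounded complex $D^\bullet$ of finitely generated free modules over a DVR $\mathcal O$ with fraction field $K$ and residue field $k$, one has $\dim_K H^i(D\otimes K)\le\dim_k H^i(D\otimes k)$, because the rank of an $\mathcal O$-valued matrix can only decrease upon reducing modulo $\pi$ (any minor that is nonzero mod $\pi$ is already a unit in $\mathcal O$). Applied to $\widetilde C^\bullet$, this yields \eqref{eq:ineqps}. I expect the main obstacle in this plan to be the \emph{linearization-equals-Aomoto} identification of the first paragraph: one must construct a minimal cochain complex $(C^\bullet,\partial^\bullet)$ for $M(\A)$ whose associated graded with respect to the $I$-adic filtration agrees with the Aomoto complex \emph{on the nose}, not merely up to quasi-isomorphism. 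This is the cellular refinement of the Papadima--Suciu tangent-cone picture and depends on an explicit (e.g.\ Morse-theoretic or Salvetti-complex) construction of the minimal model of an arrangement complement; the case $p=2$, where $\pi=\zeta_2-1=-2$ already lies in $\Z$, also needs verification but presents no new difficulty.
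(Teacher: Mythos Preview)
The paper does not prove this theorem; it is quoted from Papadima--Suciu \cite{PS} as background and motivation, with no argument given. So there is no ``paper's own proof'' to compare against.

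That said, your sketch is a faithful recasting of the Papadima--Suciu argument. Their original presentation uses the $I$-adic filtration on the equivariant minimal chain complex and the associated spectral sequence, whose $E_1$-page is the Aomoto complex; your version replaces the spectral sequence by an explicit rescaling over the DVR $\Z[\mu_p]_{(\pi)}$ followed by upper semicontinuity, which is essentially the same mechanism unwound. You have also correctly isolated the one genuinely nontrivial ingredient: that the \emph{linear part} of the differential in a minimal $\Z[\pi_1]^{\mathrm{ab}}$-complex for $M(\A)$ is, on the nose, multiplication by $\omega$ in the Orlik--Solomon algebra. This is exactly the content of the ``exponential/tangent-cone'' formula in \cite{PS} (building on \cite{dim-pap}), and without it the argument does not go through. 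One minor point: your claim that $\partial^k\in I\cdot M_{b_{k+1}\times b_k}(R)$ follows immediately from minimality (the complex at the trivial character has zero differential), so that step needs no further justification; and the $p=2$ case is indeed handled by the same formula since $\zeta_2-1=-2$ is a uniformizer at $2$.
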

In view of Papadima and Suciu's inequality (\ref{eq:ineqps}), 
it is natural to expect that CDO-type vanishing theorem for 
a $p$-torsion local system may be deduced from that of the Aomoto complex 
with finite field coefficients. The main result of this paper is 
the following CDO-type vanishing theorem for Aomoto complex with 
arbitrary coefficient ring. 

\begin{theorem}
\label{thm:intromain}
(Theorem \ref{thmprincipal}) 
Let $\A=\{H_1, \dots, H_n\}$ be an essential affine hyperplane arrangement 
in $\R^\ell$. Let $R$ be a commutative ring with $1$. 
Let 
$\omega=\sum_{i=1}^n\lambda_ie_i\in A_R^1(\A)$. 
Suppose that $\lambda_X\in R^\times$ for any dense edge $X$ 
contained in the hyperplane at infinity. Then the following holds. 
\begin{equation}
\label{eq:main}
H^k(A_R^\bullet(\A), \omega\wedge)\simeq
\left\{
\begin{array}{ll}
0,& \mbox{ if $k\neq \ell$, }\\
&\\
R^{|\chi(M(\A))|},& \mbox{ if $k=\ell$.}
\end{array}
\right.
\end{equation}
\end{theorem}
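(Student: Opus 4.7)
The plan is to exploit the chamber-theoretic description of the Orlik--Solomon algebra, and hence of the Aomoto complex, available for real arrangements via the minimality of $M(\A)$. First, I would install an explicit chamber basis on $A_R^\bullet(\A)$. For the essential real arrangement $\A$ in $\R^\ell$, fix a generic affine flag $F^0\subset F^1\subset\cdots\subset F^\ell=\R^\ell$. From the minimality of $M(\A)$ and the associated chamber basis of the Orlik--Solomon algebra (central to the minimality approach developed by the second author in earlier work), each $A_R^k(\A)$ is a free $R$-module of rank $\beta_k(M(\A))$ with a basis indexed by chambers of the section $\A\cap F^k$ lying on the ``far'' side of $F^{k-1}$ from a chosen basepoint. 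In this basis, the multiplication map $\omega\wedge\colon A_R^k\to A_R^{k+1}$ admits an entirely combinatorial presentation whose matrix entries are $R$-linear combinations of the $\lambda_i$, read off from wall-incidences between chambers.

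Next, I would split the basis chambers into bounded and unbounded ones. In top degree $k=\ell$ the bounded basis has cardinality $|\bch(\A)|=|\chi(M(\A))|$, matching the expected rank of $H^\ell$, so the combinatorial shape of the answer is already visible. What remains to prove is that the sub/quotient complex generated by unbounded basis chambers is acyclic; the vanishing of $H^k$ for $k<\ell$ and the precise identification of $H^\ell$ will then follow from a short diagram chase together with the rank count $\sum_k(-1)^k\beta_k=\chi(M(\A))$.

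The crux is the acyclicity of the unbounded part. Unbounded chambers are naturally organized by the structure of the arrangement at infinity, and in particular by the stratification of the hyperplane at infinity $H_\infty$ by its dense edges. I would proceed by induction on this stratification, peeling off the local contribution of one dense edge $X\subset H_\infty$ at a time. At each step the hypothesis $\lambda_X\in R^\times$ provides an explicit inverse for the local block of the Aomoto differential supported at $X$, producing an $R$-linear contracting homotopy on the corresponding piece of the chamber complex. After all dense edges at infinity have been processed, the unbounded-chamber part is acyclic and the quotient is concentrated in degree $\ell$, freely generated by the bounded chambers of $\A$.

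The hardest point will be the inductive construction of these contracting homotopies over an arbitrary commutative ring $R$. Since generic-parameter arguments are unavailable, one must verify at every stage that the block of $\omega\wedge$ being inverted is, up to sign and a sum of units, exactly $\lambda_X$ itself, with no ``hidden'' factors coming from nonresonance at interior dense edges (which are allowed to be zero in $R$ by hypothesis). Matching the chamber combinatorics with the dense-edge stratification of $H_\infty$ carefully enough to see this is the technical heart of the argument, and is also what distinguishes the CDO-type hypothesis used here from the stronger Yuzvinsky-type hypothesis of Theorem~\ref{thm:yuz}.
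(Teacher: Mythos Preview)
Your setup is right and matches the paper: pass to the chamber complex $(R[\ch^\bullet(\A)],\nabla_{\omega_\lambda})$ and split $\ch^k(\A)=\bch^k(\A)\sqcup\uch^k(\A)$. But the next step, as written, has a structural gap. The unbounded chambers do not span a sub- or quotient complex of $(R[\ch^\bullet(\A)],\nabla_{\omega_\lambda})$: the coboundary mixes the two summands in both directions, so there is no ``unbounded part'' whose acyclicity can be established in isolation and then fed into a long exact sequence. What the paper actually uses is the \emph{cross-block}
\[
\overline{\nabla}_{\omega_\lambda}\colon R[\bch^k(\A)]\hookrightarrow R[\ch^k(\A)]\xrightarrow{\ \nabla_{\omega_\lambda}\ } R[\ch^{k+1}(\A)]\twoheadrightarrow R[\uch^{k+1}(\A)],
\]
and shows it is an isomorphism for every $k<\ell$. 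A staircase argument then gives $H^k=0$ for $k<\ell$ and $H^\ell\simeq R[\bch^\ell(\A)]$ directly, with freeness built in; your Euler-characteristic count would yield only the rank, not freeness over a general ring $R$.

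The mechanism for inverting $\overline{\nabla}_{\omega_\lambda}$ is also more concrete than the stratified contracting-homotopy scheme you sketch. The key object you are missing is the explicit bijection $\iota\colon\bch^k(\A)\to\uch^{k+1}(\A)$, $C\mapsto C^\lor$, sending $C$ to its opposite chamber across $X(C):=\langle\barC\cap\barH_\infty\rangle$. Ordering $\bch^{\ell-1}(\A)$ by decreasing $\dim X(C)$, the paper proves---by constructing explicit vector fields on $F^{\ell-1}$ directed toward $C_j^\lor$ and nowhere zero on $\barC_i\cap F^{\ell-1}$---that the matrix of $\overline{\nabla}_{\omega_\lambda}$ in this basis is upper-triangular with diagonal entries $\pm\lambda_{X(C)}$ (via Proposition~\ref{prop:lambdadense} and the degree formula $\deg(C,C^\lor)=(-1)^{\ell-1-\dim X(C)}$); hence $\det\overline{\nabla}_{\omega_\lambda}=\pm\prod_C\lambda_{X(C)}\in R^\times$. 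Your intuition that each local block inverts ``exactly $\lambda_X$'' is correct and is precisely what this diagonal computation says, but the organizing principle is a single upper-triangularity statement (Theorems~\ref{thm:triangular} and~\ref{thm:degree}) rather than a layered induction; the hard work lies in those degree computations on the polyhedral geometry of chambers, not in the homological algebra.
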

Our proof relies on several works (\cite{y-lef, y-mini, y-cham}) 
concerning minimality of arrangements. 
We can also provide an alternative proof of 
Theorem \ref{thm:cdo} for real arrangements. 

This paper is organized as follows. 

In \S \ref{sec:notation}, 
we recall basic terminologies and the description of 
Aomoto complex in terms of chambers developed in 
\cite{y-lef, y-mini, y-cham}. We also recall 
the description of twisted minimal complex in terms 
of chambers. Simply speaking, two cochain complexes 
$(R[\ch^\bullet(\A)], \nabla_{\omega_\lambda})$ and 
$(\C[\ch^\bullet(\A)], \nabla_{\LL})$ are constructed 
by using the real structures of $\A$ (adjacent relations of chambers). 
These cochain complexes provide a parallel description 
between the cohomology of Aomoto complex and the local system 
cohomology group. 
Indeed, using these complexes, we can 
prove simultaneously CDO-type vanishing result 
for both cases. 

In \S \ref{sec:results}, 
we state the main result and describe the strategy for the proof. 
The proof consists of an easy part and a hard part. 
The easy part of the proof is done mainly by 
elementary arguments on cochain complex, which is also done 
in this section. The hard part is done in the subsequent section 
(\S \ref{sec:proofs}). 

The final section \S \ref{sec:proofs} is devoted to 
analyze the polyhedral structures of chambers 
which are required for matrix presentations of the 
coboundary map of $(R[\ch^\bullet(\A)], \nabla_{\omega_\lambda})$.

\section{Notations and Preliminaries}

\label{sec:notation}

\subsection{Orlik-Solomon algebra and Aomoto complex}

\label{subsec:os}

Let $\A=\{H_1,\hdots,H_n\}$ be an affine 
hyperplane arrangement in $V=\R^\ell$. 
Denote by $M(\A)=\C^\ell \smallsetminus \cup_{i=1}^n H_i \otimes \C$ 
the complement of the complexified hyperplanes.
By identifying $\R^\ell$ with $\PP_{\R}^\ell\smallsetminus\barH_\infty$, 
define the projective closure by 
$\barA=\{\barH_1,\hdots,\barH_n,\barH_\infty\}$, where 
$\barH_i\subset\PP_{\R}^\ell$ is the closure of $H_i$ in the projective space. 
We denote $L(\A)$ and $L(\barA)$ the intersection posets of 
$\A$ and $\barA$, respectively, namely, the poset of subspaces 
obtained as intersections of some hyperplanes with reverse inclusion order. 
An element of $L(\A)$ (and $L(\barA)$) is also called an edge. 
We denote by $L_k(\A)$ the set of all $k$-dimensional edges. 
For example $L_{\ell}(\A)=\{V\}$ and $L_{\ell-1}(\A)=\A$. 
Then $\A$ is essential if and only if $L_0(\A)\neq\emptyset$. 

Let $R$  be a commutative ring. Orlik and Solomon gave a 
simple combinatorial description of the algebra $H^*(M(\mathcal{A}),R)$, 
which is the quotient of the exterior algebra on 
classes dual to the meridians, modulo a certain 
ideal determined by $L(\A),$ see \cite{O-S}. 
More precisely, by associating to any hyperplane $H_i$ a generator $e_i \simeq \frac{1}{2\pi\sqrt{-1}} d \log \alpha_i,$ the Orlik-Solomon algebra $A^\bullet_R(\A)$ of $\A$ is the quotient of the exterior algebra generated by the elements $e_i,\,1\leq i \leq n,$ modulo the ideal $I(\A)$ generated by:
\begin{itemize}
\item the elements of the form $\{e_{i_1}\wedge\cdots\wedge  e_{i_s}\,|\,H_{i_1} \cap \cdots \cap H_{i_s}= \emptyset\},$
\item the elements of the form $\{\partial(e_{i_1}\wedge\cdots\wedge  e_{i_s})\,|\,H_{i_1} \cap \cdots \cap H_{i_s}\neq \emptyset\,\,\text{and}\,\,\codim(H_{i_1} \cap \cdots \cap H_{i_s})<s\}$, where 
$\partial(e_{i_1}\wedge\cdots\wedge  e_{i_s})=\sum_{\alpha=1}^s(-1)^{\alpha-1}
e_{i_1}\wedge\dots\wedge\widehat{e_{i_\alpha}}\wedge\dots\wedge e_{i_s}$. 
\end{itemize}
Let $\lambda=(\lambda_1, \dots, \lambda_n)\in R^n$ and $\omega_{\lambda}= {\sum_{i=1}^n \lambda_i e_i}\in A^1_R(\A).$ The cochain complex 
$(A^\bullet_R(\A),\omega_\lambda\wedge)=\{A^\bullet_R(\A)\stackrel{\omega_\lambda\wedge}{\longrightarrow}A^{\bullet+1}_R(\A)\}$ is called 
\emph{the Aomoto complex}. 


We say that an edge $X \in L(\barA)$ is \textit{dense} if the localization $\barA_X=\{\barH\in \barA \,|\, X \subseteq \barH \}$ is indecomposable (see 
\cite{ot-int} for more details). 
We consider each hyperplane $\barH\in\barA$ is a dense edge.
In this paper, the set of dense edges of $\barA$ contained in $\barH_\infty$ 
plays an important role. We denote by $\dense(\barA)$ the set of all 
dense edges contained in $\barH_\infty$. We will characterize 
$X\in\dense(\barA)$ in terms of chambers in Proposition \ref{densechamber}. 

Set $\lambda_\infty:= - \sum_{i=1}^n \lambda_i$, and for any $X\in L(\barA)$, 
$\lambda_X:= {\sum_{\barH_i\supset X} \lambda_i}$, where the index $i$ 
runs $\{1, 2, \dots, n, \infty\}$. 

The isomorphism class of 
a rank one local system $\LL$ on the complexified complement $M(\A)$ is 
determined by the monodromy $q_i\in\C^\times$ around each hyperplane 
$H_i$. As in the case of 
Aomoto complex, we denote $q_\infty=(q_1q_2\cdots q_n)^{-1}$ 
and $q_X=\prod_{\barH_i\supset X}q_i$ for an edge $X\in L(\barA)$.

\subsection{Chambers and minimal complex}

\label{subsec:chambers}

In this section, we recall the description of the minimal complex 
in terms of real structures from \cite{y-lef, y-mini, y-cham}. 
Let $\A=\{H_1, \dots, H_n\}$ be an essential 
hyperplane arrangement in $\R^\ell$. 
A connected component of $\R^\ell\smallsetminus\bigcup_{i=1}^nH_i$ is called a 
chamber. The set of all chambers of $\A$ is denoted by $\ch(\A)$. 
A chamber $C\in\ch(\A)$ is called a bounded chamber if $C$ is 
bounded. The set of all bounded chambers of $\A$ is denoted by 
$\bch(\A)$. 
For a chamber $C\in\ch(\A)$, 
denote by $\barC$ the closure of $C$ in $\PP_{\R}^\ell$.
It is easily seen that 
a chamber $C$ is bounded if and only if $\barC\cap\barH_\infty=\emptyset$. 

For given two chambers $C, C'\in\ch(\A)$, denote by 
\[
\Sep(C, C'):=\{H_i\in\A\mid \mbox{ $H_i$ separates $C$ and $C'$}\}, 
\]
the set of separating hyperplanes of $C$ and $C'$.

For the description of the minimal complex, we have to fix a 
generic flag. Let 
\[
\FF: \emptyset=F^{-1}\subset F^0\subset F^1\subset\dots\subset F^{\ell}
=\R^\ell
\]
be a generic flag (i.e., $F^k$ is a generic $k$-dimensional affine subspace, 
in other words, $\dim(\barX\cap\barF^k)=\dim \barX+k-\ell$ for any 
$\barX\in L(\barA)$). 
The genericity of $\FF$ is equivalent to 
\[
F^k\cap L_i(\A)=L_{k+i-\ell}(\A\cap F^k), 
\]
for $k+i\geq\ell$.

\begin{definition}
\label{def:nearinfty}
We say that the hyperplane $F^{\ell-1}$ is near to $\barH_\infty$ when 
$F^{\ell-1}$ does not separate $0$-dimensional 
edges $L_0(\A)\subset\R^\ell$. 
Similarly, we say the flag $\FF$ is near to $\barH_\infty$ when 
$F^{k-1}$ does not separate $L_0(\A\cap F^{k})$ for all 
$k=1, \dots, \ell$. 
\end{definition}
From this point, we assume that the flag $\FF$ is 
near to $\barH_\infty$. 
For a generic flag $\FF$ near to $\barH_\infty$, we define 
\[
\begin{split}
\ch^k(\A)
&=
\{C\in\ch(\A)\mid C\cap F^k\neq\emptyset, C\cap F^{k-1}=\emptyset\}\\
\bch^k(\A)
&=
\{C\in\ch^k(\A)\mid C\cap F^k\mbox{ is bounded}\}\\
\uch^k(\A)
&=
\{C\in\ch^k(\A)\mid C\cap F^k\mbox{ is unbounded}\}.\\
\end{split}
\]
Then clearly, we have 
\[
\begin{split}
\ch^k(\A)&=\bch^k(\A)\sqcup\uch^k(\A)\\ 
\ch(\A)&=\bigsqcup_{k=0}^\ell\ch^k(\A).
\end{split}
\]
Note that $\bch^\ell(\A)=\bch(\A)$, however, for $k<\ell$, 
$C\in\bch^k(\A)$ is an unbounded chamber. 

\begin{definition}(\cite[Definition 2.1]{y-mini})
Let $C\in \bch(\A).$ 
There exists a unique chamber, denoted by $C^{\vee}\in\uch(\A)$, 
which is the opposite with respect to $\overline{C}\cap \barH_\infty,$ 
where $\overline{C}$ is the closure of $C$ in the projective space 
$\PP_{\R}^\ell$. 
\end{definition}

\begin{figure}[htbp]
\centering
\begin{tikzpicture}[scale=1]


\draw[thick,rounded corners=0.6cm] (1,1) -- (8,1) -- (8,4.5) node[right] {$\overline{H}_\infty$};

\draw[thick] (1,2.5) node [left] {$H_1$} --(8.5,3.5);
\draw[thick] (1,3.5) node [left] {$H_2$} --(8.5,1.5);

\draw[thick,rounded corners=0.3cm] (5,0.5) -- (3,1.5) -- (3,4.5) node [above] {$H_3$}; 
\draw[thick,rounded corners=0.3cm] (4,0) -- (4,4.5) node [above] {$H_4$}; 
\draw[thick,rounded corners=0.3cm] (3,0.5) -- (5,1.5) -- (5,4.5) node [above] {$H_5$}; 

\filldraw[fill=black, draw=black] (4,1) circle (2pt);

\draw (2,1.5) node[above] {$C_1$};
\draw (3.5,1.5) node[above] {$C_2$};
\draw (4.5,1.5) node[above] {$C_3$};
\draw (6,1.5) node[above] {$C_4$};

\draw (2,3.5) node[above] {$C_4^\lor$};
\draw (3.5,3.5) node[above] {$C_2^\lor$};
\draw (4.5,3.5) node[above] {$C_3^\lor$};
\draw (6,3.5) node[above] {$C_1^\lor$};

\draw (2,0) node[above] {$C_1^\lor$};
\draw (3.5,0) node[above] {$C_3^\lor$};
\draw (4.5,0) node[above] {$C_2^\lor$};
\draw (6,0) node[above] {$C_4^\lor$};

\draw[very thick] (1,1) node[above] {\footnotesize $\overline{C_1}\cap\overline{H}_\infty$} --(4,1);

\end{tikzpicture}
\caption{Opposite chambers}
\label{fig:opposite}
\end{figure}
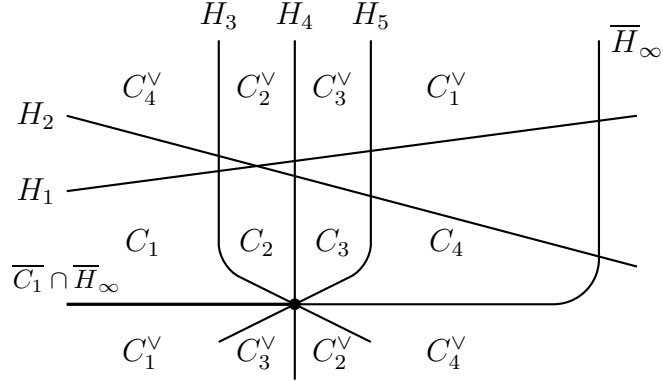

Let us denote the projective subspace generated by $\barC\cap\barH_\infty$ 
by $X(C)=\langle\barC\cap\barH_\infty\rangle$. 

\begin{proposition}
\label{prop:charsep}Let $C\in \bch(\A),$ then
\begin{equation}
\label{eq:charsep}
\Sep(C, C^\lor)=\{H\in\A\mid \barH\not\supset X(C)\}=
\barA\smallsetminus\barA_{X(C)}. 
\end{equation}
\end{proposition}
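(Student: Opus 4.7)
The plan is to exhibit a single affine line $L\subset\R^\ell$ that goes from $C^\lor$ at $t\to-\infty$ to $C$ at $t\to+\infty$, and to show that the hyperplanes $L$ crosses are exactly those $H_i$ with $\barH_i\not\supset X(C)$; this yields both inclusions of (\ref{eq:charsep}) at once. Before starting the main argument, I note that the second equality $\{H\in\A\mid\barH\not\supset X(C)\}=\barA\smallsetminus\barA_{X(C)}$ is immediate from the definitions: since $X(C)\subseteq\barH_\infty$, the hyperplane $\barH_\infty$ itself lies in $\barA_{X(C)}$, so the difference $\barA\smallsetminus\barA_{X(C)}$ contains only affine hyperplanes.

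First, I would choose a direction $v\in\R^\ell$ so that $[v]\in\barH_\infty$ lies in the relative interior of the face $\barC\cap\barH_\infty$ and, moreover, $[v]\notin\barH_i$ for every $\barH_i$ that does not contain $X(C)$. Such $v$ exists because each $\barH_i\cap X(C)$ is a proper projective subspace of $X(C)$ whenever $\barH_i\not\supset X(C)$, so a finite union of these proper subspaces cannot cover the relative interior of $\barC\cap\barH_\infty$ (which is full-dimensional in $X(C)$ by the definition of $X(C)$ as the projective span). Pick a point $p_0\in\R^\ell$ in general position and set $L(t)=p_0+tv$. By the choice of $v$, the line $L$ is parallel to $H_i$ iff $[v]\in\barH_i$ iff $\barH_i\supset X(C)$; hence $L$ transversally crosses each $H_i\in\A$ with $\barH_i\not\supset X(C)$ at a single point, and avoids every other hyperplane in $\A$. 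Since each transversal crossing flips the sign of $\alpha_i$ along $L$, the set of hyperplanes on which the two ends $L(\pm\infty)$ sit on opposite sides is exactly $\{H_i\in\A\mid\barH_i\not\supset X(C)\}$.

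Finally, I would identify $L(t)$ for $t\gg 0$ with a point of $C$ and $L(t)$ for $t\ll 0$ with a point of $C^\lor$. Both ends of $L$ converge in $\PP_\R^\ell$ to the same point $[v]$, but approach $\barH_\infty$ from opposite affine sides. Since $[v]$ lies in the relative interior of the shared face $\barC\cap\barH_\infty=\barC^\lor\cap\barH_\infty$, the two ends land in the two distinct chambers having that face at infinity, namely $C$ and $C^\lor$; combined with the crossing count this gives the proposition. The main obstacle will be this last identification step: cleanly matching the projective picture of ``two chambers adjacent to $[v]$ through $\barH_\infty$'' with the formal definition of $C^\lor$ from \cite{y-mini}, and handling degenerate cases of $X(C)$ (e.g.\ when $C$ is truly bounded, in which case $X(C)$ is empty and both sides of (\ref{eq:charsep}) are empty, so the statement holds vacuously with $C^\lor=C$).
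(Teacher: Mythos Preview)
Your approach is essentially the paper's: both take a point $p'$ (your $[v]$) in the relative interior of $\barC\cap\barH_\infty$ and study the projective line through it, using that this line meets each $\barH\in\barA_{X(C)}$ precisely at $p'$ and every other $\barH$ at a distinct affine point. The paper dispatches your acknowledged obstacle simply by anchoring the line at a point $p\in C$ rather than at a $p_0$ ``in general position'': with that choice the ray from $p$ toward $p'$ stays in $C$ (as $v$ lies in the recession cone of $C$), and by the very definition of $C^\lor$ as the chamber opposite through the face $\barC\cap\barH_\infty$ one obtains $p''\in C^\lor\cap L$ on the other side of $p'$; the affine segment $[p,p'']$ then exhibits $\Sep(C,C^\lor)$ directly. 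Two minor remarks: your extra avoidance condition on $[v]$ is automatic once $[v]$ is in the relative interior of the face (any $\barH_i\not\supset X(C)$ meets that face only in a proper subface), and the hypothesis ``$C\in\bch(\A)$'' in the statement is a typo for unbounded chambers, so your closing digression about the truly bounded case is moot.
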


\begin{proof}
Let $p\in C$ and $p'$ be a point in the relative interior of 
$\barC\cap\barH_\infty$. 
Take the line $L=\langle p, p'\rangle\subset\PP_{\R}^\ell$. 
Choose a point $p''\in C^\lor\cap L$. 
Then consider the segment 
$[p, p'']\subset \R^\ell=\PP_{\R}^\ell\smallsetminus\barH_\infty$ 
(See Figure \ref{fig:segment}). 
On the projective space $\PP_{\R}^\ell$, the line $L=\langle p, p'\rangle$ 
must intersect every hyperplane $\barH\in\barA$ exactly once. 
Furthermore, $L$ intersects $\barH\in\barA_{X(C)}$ at $p'$. 
On the other hand, the segment $[p, p'']$ intersects 
$H\in\Sep(C, C^\lor)$.  
Hence we have (\ref{eq:charsep}). 
\end{proof}

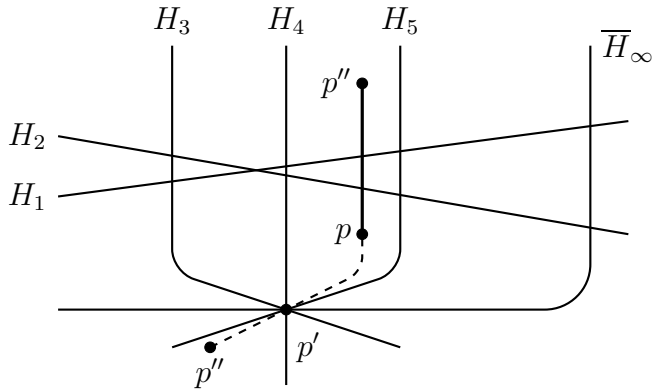
\begin{figure}[htbp]
\centering
\begin{tikzpicture}[scale=1]


\draw[thick,rounded corners=0.6cm] (1,1) -- (8,1) -- (8,4.5) node[right] {$\overline{H}_\infty$};

\draw[thick] (1,2.5) node [left] {$H_1$} --(8.5,3.5);
\draw[thick] (1,3.3) node [left] {$H_2$} --(8.5,2);

\draw[thick,rounded corners=0.3cm] (5.5,0.5) -- (2.5,1.5) -- (2.5,4.5) node [above] {$H_3$}; 
\draw[thick,rounded corners=0.3cm] (4,0) -- (4,4.5) node [above] {$H_4$}; 
\draw[thick,rounded corners=0.3cm] (2.5,0.5) -- (5.5,1.5) -- (5.5,4.5) node [above] {$H_5$}; 

\filldraw[fill=black, draw=black] (3,0.5) node [below] {$p''$} circle (2pt);
\filldraw[fill=black, draw=black] (5,4) node [left] {$p''$} circle (2pt);
\filldraw[fill=black, draw=black] (5,2) node [left] {$p$} circle (2pt);
\filldraw[fill=black, draw=black] (4,0.5) node [right] {$p'$};

\draw[thick,dashed,rounded corners=0.2cm] (3,0.5)--(5,1.5)--(5,2);
\draw[very thick] (5,2)--(5,4);

\filldraw[fill=black, draw=black] (4,1) circle (2pt);

\end{tikzpicture}
\caption{The segment $[p, p'']$ (thick segment).}
\label{fig:segment}
\end{figure}

\begin{corollary}
\label{cor:sep}
If $\dim X(C)=\ell-1$, then $\Sep(C, C^\lor)=\A$. 
\end{corollary}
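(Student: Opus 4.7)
The plan is to read the conclusion off from the formula in Proposition \ref{prop:charsep} by using the hypothesis on $\dim X(C)$ to pin down the localization $\barA_{X(C)}$ exactly. Recall that $X(C) = \langle \barC \cap \barH_\infty \rangle$ is, by construction, a projective subspace of $\barH_\infty$. Since $\dim \barH_\infty = \ell - 1$, the hypothesis $\dim X(C) = \ell - 1$ forces the equality $X(C) = \barH_\infty$. This is the only geometric input needed.

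With this identification, I would compute $\barA_{X(C)}$ directly: any $\barH \in \barA$ with $\barH \supseteq X(C) = \barH_\infty$ must coincide with $\barH_\infty$, because two distinct projective hyperplanes in $\PP^\ell_{\R}$ cannot satisfy a strict inclusion. Hence $\barA_{X(C)} = \{\barH_\infty\}$, and substituting into (\ref{eq:charsep}) gives
\[
\Sep(C, C^\lor) \;=\; \barA \smallsetminus \barA_{X(C)} \;=\; \barA \smallsetminus \{\barH_\infty\} \;=\; \A,
\]
which is the desired equality. The argument is essentially formal, so I do not anticipate any real obstacle; the corollary simply records the extreme case of Proposition \ref{prop:charsep} in which $X(C)$ fills up the entire hyperplane at infinity and the set of separating hyperplanes is consequently as large as possible.
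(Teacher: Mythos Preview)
Your argument is correct and follows exactly the paper's approach: both proofs observe that $\barA_{X(C)}=\{\barH_\infty\}$ under the hypothesis and then invoke Proposition~\ref{prop:charsep}. You have simply spelled out in more detail why $\barA_{X(C)}=\{\barH_\infty\}$, which the paper asserts without further comment.
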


\begin{proof}
In this case, $\barA_{X(C)}=\{\barH_\infty\}$. Proposition 
\ref{prop:charsep} concludes $\Sep(C, C^\lor)=\A$. 
\end{proof}

\begin{proposition}
\label{prop:bchuch}
(\cite{y-lef, y-mini})
\begin{itemize}
\item[$(i)$] $\#\ch^k(\A)=b_k$, where $b_k=b_k(M(\A))$. 
\item[$(ii)$] $\#\bch^k(\A)=\#\uch^{k+1}(\A)$. 
\item[$(iii)$] $\#\bch^k(\A)=b_k-b_{k-1}+\dots+(-1)^kb_0$. 
\end{itemize}
\end{proposition}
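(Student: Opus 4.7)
The plan is to reduce the counts to the restricted arrangements
$\A\cap F^k$ in $F^k\cong\R^k$. Since chambers are open and
convex, the intersection $C\cap F^k$, when nonempty, is a chamber
of $\A\cap F^k$; this gives a bijection
\[
\{C\in\ch(\A)\mid C\cap F^k\neq\emptyset\}\longrightarrow \ch(\A\cap F^k),
\qquad C\longmapsto C\cap F^k.
\]
A generic section argument gives $b_i(M(\A\cap F^k))=b_i(M(\A))$
for $i\leq k$, so by Zaslavsky's theorem
$\#\ch(\A\cap F^k)=\sum_{i=0}^{k}b_i$.

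Part (i) follows by telescoping: since $\ch^k(\A)$ counts
chambers meeting $F^k$ but not $F^{k-1}$,
$\#\ch^k(\A)=\sum_{i=0}^{k}b_i-\sum_{i=0}^{k-1}b_i=b_k$. For (iii),
the near-infinity condition of Definition \ref{def:nearinfty}
plays the decisive role: any bounded chamber of $\A\cap F^k$ is
the convex hull of its $0$-dimensional vertices, which belong to
$L_0(\A\cap F^k)$. Since $F^{k-1}$ does not separate
$L_0(\A\cap F^k)$, all such vertices---and hence the entire
bounded chamber---lie on one side of $F^{k-1}$. Under the
bijection above this identifies $\bch^k(\A)$ with
$\bch(\A\cap F^k)$; Zaslavsky's formula for bounded chambers
combined with the Betti number identity then yields
$\#\bch^k(\A)=\sum_{i=0}^{k}(-1)^{k-i}b_i$.

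Finally, (ii) is a formal corollary of (i) and (iii) via
$\ch^{k+1}(\A)=\bch^{k+1}(\A)\sqcup\uch^{k+1}(\A)$:
\[
\#\uch^{k+1}(\A)
=b_{k+1}-\sum_{i=0}^{k+1}(-1)^{k+1-i}b_i
=\sum_{i=0}^{k}(-1)^{k-i}b_i=\#\bch^k(\A).
\]
The main conceptual obstacle is the geometric step in (iii):
establishing that the near-infinity condition really does force
bounded chambers of $\A\cap F^k$ to avoid $F^{k-1}$. This depends
on the (standard but worth spelling out) fact that a bounded
chamber is the convex hull of its $0$-dimensional vertices, and
on the fact that the flag property near $\barH_\infty$ is
imposed at every level of the flag, not just at $F^{\ell-1}$.
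Everything else reduces to routine bookkeeping with Zaslavsky's
formulas and the generic section identity for Betti numbers.
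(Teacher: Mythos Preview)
Your argument is correct. The paper itself does not prove this proposition (it cites \cite{y-lef, y-mini}), and your route for (i) and (iii)---reducing to the generic sections $\A\cap F^k$ and invoking Zaslavsky's formulas together with the identity $b_i(M(\A\cap F^k))=b_i(M(\A))$ for $i\leq k$---is the standard one. Your geometric justification that the near-infinity condition forces every bounded chamber of $\A\cap F^k$ to miss $F^{k-1}$ (convex hull of vertices in $L_0(\A\cap F^k)$, all on one side of $F^{k-1}$) is exactly the point that needs to be made.

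The one substantive difference from the paper concerns (ii). You derive it arithmetically from (i) and (iii), which is perfectly valid as a counting statement. The paper, however, stresses (immediately after the proposition) that (ii) is witnessed by an \emph{explicit bijection}
\[
\iota:\bch^k(\A)\stackrel{\simeq}{\longrightarrow}\uch^{k+1}(\A),\qquad C\longmapsto C^\lor,
\]
the opposite-chamber map. Your numerical deduction does not furnish this bijection, and the bijection is not incidental: the whole strategy of \S\ref{subsec:strategy} is to express $\overline{\nabla}_{\omega_\lambda}$ as a matrix with respect to the ordered bases paired by $\iota$, to show upper-triangularity (Theorem~\ref{thm:triangular}) relative to that pairing, and to identify the diagonal entries as $\pm\lambda_{X(C)}$ via $\deg(C,C^\lor)$ (Theorem~\ref{thm:degree}). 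So the bijective proof of (ii) carries structural information that the paper relies on downstream, whereas your arithmetic proof, while sufficient for the proposition as stated, would leave that information to be supplied separately.
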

Concerning $(ii)$ of Proposition \ref{prop:bchuch}, 
an explicit bijection is given by the opposite chamber, 
\[
\iota: \bch^k(\A)\stackrel{\simeq}{\longrightarrow}\uch^{k+1}(\A), 
C\longmapsto C^\lor. 
\]

Next result characterizes the dense edge contained in $\barH_\infty$. 

\begin{proposition}(\cite[Proposition 2.4]{y-mini})\label{densechamber}
Let $\A$ be an affine arrangement in $\R^\ell.$ 
An edge $X\in L(\barA)$ with $X \subseteq \barH_\infty$ 
is dense if and only if $X=X(C)$ for some chamber 
$C\in \uch(\A).$ In particular, we have 
\begin{equation}
\label{eq:denseedge}
\dense(\A)=\{X(C)\mid C\in\uch(\A)\}. 
\end{equation}
\end{proposition}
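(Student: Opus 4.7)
The plan is to characterize both sides in terms of the linear span $V' \subseteq \R^\ell$ of the recession cone of $C$. Observe that $\barC \cap \barH_\infty$ equals the projective image of the recession cone of $C$, so $X(C) = \PP(V')$, where $V'$ is the lift of $X$ to $\R^\ell$. In particular, $X(C) = X$ for an unbounded chamber $C$ precisely says that $\operatorname{rec}(C)$ is full-dimensional in $V'$, i.e.\ spans $V'$ as a linear subspace.

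For the implication $(\Leftarrow)$, suppose $C \in \uch(\A)$ has $X(C) = X$. Consider the linear projection $\pi : \R^\ell \to \R^\ell / V'$. Hyperplanes of $\A$ whose direction does not contain $V'$ project to the whole quotient and become irrelevant, while those in $\barA_X \smallsetminus \{\barH_\infty\}$ project to hyperplanes of an essential affine arrangement on $\R^\ell/V'$. The image $\pi(C)$ is a bounded convex open set, since every recession direction of $C$ has been collapsed. I would then argue that any decomposition $\barA_X = \barA' \sqcup \barA''$ by complementary normal directions modulo $V'$ would split the quotient arrangement as a product of two subarrangements; since product chambers are unbounded in at least one factor direction unless both factors produce bounded pieces simultaneously, and the spanning condition on $\operatorname{rec}(C)$ would force incompatible product structure on $\pi(C)$, one obtains a contradiction. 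This forces $\barA_X$ to be indecomposable, so $X$ is dense.

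For the implication $(\Rightarrow)$, I would start from density of $X$ (equivalently, indecomposability of $\barA_X$) and construct the desired unbounded chamber $C$. Indecomposability produces a bounded chamber $D$ of the quotient arrangement $\pi(\barA_X \smallsetminus \{\barH_\infty\})$ in $\R^\ell/V'$; any chamber of $\A$ lying in $\pi^{-1}(D) \subseteq \R^\ell$ and unbounded in the $V'$ direction yields a candidate $C \in \uch(\A)$ with $\operatorname{rec}(C)$ spanning $V'$. Concretely, one can pick a generic ray of $\R^\ell$ with direction in $V'$ passing through the preimage of a generic interior point of $D$; after sufficiently far translation along this ray the point stabilizes in a single chamber, and indecomposability ensures this chamber has recession cone of the required linear span.

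The main obstacle is precisely the combinatorial-geometric dictionary between indecomposability of $\barA_X$ and the existence of chambers with recession cone spanning $V'$. The algebraic formulation of density (no partition of the normal vectors into complementary subspaces of $(V')^\perp$) must be translated into the polytopal statement about chamber recession cones, which uses the product structure of decomposable central arrangements together with a careful tracking of unbounded directions. Once the correspondence between $\{C \in \uch(\A) : X(C) = X\}$ and the appropriate chambers of the quotient arrangement is made precise, both implications follow by a transparent dimension count; this is the technical content of Proposition~2.4 of \cite{y-mini} invoked here.
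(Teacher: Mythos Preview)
The paper does not prove this proposition at all: it is stated with a citation to \cite{y-mini} and no argument is given. So there is no ``paper's own proof'' to compare against; the question is only whether your sketch stands on its own.

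Your framework is the right one --- identifying $X(C)$ with the projectivization of the linear span of the recession cone, and passing to the quotient $\R^\ell/V'$ --- and this is indeed how the argument in \cite{y-mini} runs. However, what you have written is a plan rather than a proof, and in your final paragraph you explicitly defer the ``technical content'' back to Proposition~2.4 of \cite{y-mini}, which is the very statement under discussion. That is circular.

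There is also a concrete gap in your $(\Leftarrow)$ direction. You correctly observe that if $X(C)=X$ then $\operatorname{rec}(C)\subset V'$, hence $\pi(C)$ is bounded. But $\pi(C)$ is not in general a chamber of the quotient arrangement; it is only contained in some chamber $D$, and boundedness of $\pi(C)$ does not force $D$ to be bounded. Your attempted contradiction via a product decomposition is not carried through: in a decomposition $\barA_X=\barA'\sqcup\barA''$ with $\barH_\infty\in\barA'$, the deconed arrangement becomes a product of an affine arrangement in one factor with a \emph{central} arrangement in the other, and it is the centrality of the second factor (all chambers are cones, hence unbounded) that kills bounded chambers. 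You then still need to link this back to $C$ itself --- for instance by showing that the resulting common recession direction in the $W''$-factor lifts to a direction in $\operatorname{rec}(\widetilde{C})\smallsetminus V'$ and then propagates to $\operatorname{rec}(C)$, contradicting $\operatorname{span}(\operatorname{rec}(C))=V'$. None of this is in your sketch. The $(\Rightarrow)$ direction is closer to complete, though the claim that the eventual chamber along a generic ray has recession cone spanning exactly $V'$ (and not more) uses that $D$ is bounded, which you should state.
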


Next we define the degree map 
\[
\deg:\ch^k(\A)\times\ch^{k+1}(\A)\longrightarrow\Z. 
\]
Let $B=B^k\subset F^k$ be a 
$k$-dimensional ball with sufficiently large radius so that 
every $0$-dimensional edge $X\in L_0(\A\cap F^k)\simeq L_{\ell-k}(\A)$ 
is contained in the interior of $B^k$. 
Let $C\in\ch^k(\A)$ and $C'\in\ch^{k+1}(\A)$. 
Then there exists a vector field $U^{C'}$ on $F^k$ 
(\cite{y-lef}) which satisfies the following conditions. 
\begin{itemize}
\item 
$U^{C'}(x)\neq 0$ for $x\in\partial \barC\cap B^k$. 
\item 
Let $x\in\partial(B^k)\cap \barC$. Then $T_x(\partial B^k)$ can be considered 
as a hyperplane of $T_xF^k$. We impose a condition that 
$U^{C'}(x)\in T_xF^k$ is contained in the half space corresponding to 
the inside of $B^k$. 
\item 
If $x\in H\cap F^k$ for a hyperplane $H\in\A$, then 
$U^{C'}(x)\not\in T_x(H\cap F^k)$ and is directed to the side 
in which $C'$ is lying with respect to $H$. 
\end{itemize}
When the vector field $U^{C'}$ satisfies the above conditions, 
we say that \emph{the vector field $U^{C'}$ is directed to the chamber $C'$}. 
The above conditions imply that if either 
$x\in H\cap F^k$ or $x\in \partial B^k$, 
then $U^{C'}(x)\neq 0$. 
Thus for $C\in\ch^k(\A)$, $U$ is not vanishing on $\partial (\barC\cap B^k)$. 
Hence we can consider the following Gauss map. 
\[
\frac{U^{C'}}{|U^{C'}|}: \partial(\barC\cap B^k)\longrightarrow S^{k-1}. 
\]
Fix an orientation of $F^k$,  which induces an orientation on 
$\partial(\barC\cap B^k)$. 

\begin{definition}
\label{degreemap}
Define the degree $\deg(C, C')$ between $C\in\ch^k(\A)$ and 
$C'\in\ch^{k+1}(\A)$ by 
\[
\deg(C,C'):=\deg\left(\left.
\frac{U^{C'}}{|U^{C'}|}\right|_{\partial (\barC\cap B^k)}:
\partial(\barC\cap B^k)\longrightarrow S^{k-1}\right)\in\Z. 
\]
This is independent of the choice of $U^{C'}$ (\cite{y-lef}). 
\end{definition}

If the vector field $U^{C'}$ does not have zeros on $\barC\cap B^k$, 
then the Gauss map can be extended to the map 
$\barC\cap B^k\longrightarrow S^{k-1}$. Hence 
$\frac{U^{C'}}{|U^{C'}|}:\partial(\barC\cap B^k)\longrightarrow S^{k-1}$ 
is homotopic to a constant map. Thus we have the following. 

\begin{proposition}
\label{prop:degree0}
If the vector field $U^{C'}$ is nowhere zero on $\barC\cap B^k$, 
then $\deg(C, C')=0$. 
\end{proposition}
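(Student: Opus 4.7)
The plan is to exploit the elementary topological principle that a map $S^{k-1}\to S^{k-1}$ which extends continuously to the closed ball $D^k$ must have degree zero, by using that the region $\barC\cap B^k$ is contractible so it serves as such a filling.

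First I would observe that since $C\in\ch^k(\A)$, the chamber $C$ regarded in $F^k$ is the intersection of finitely many open half-spaces of $F^k$, so $\barC$ is a closed convex (possibly unbounded) polyhedron in $F^k$. The ball $B^k$ is convex, therefore $\barC\cap B^k$ is a compact convex subset of $F^k$ with nonempty interior. In particular, it is homeomorphic to the closed $k$-ball, and its boundary $\partial(\barC\cap B^k)$ is homeomorphic to $S^{k-1}$ with the orientation induced from the chosen orientation of $F^k$. Thus it makes sense to talk about the degree of a map from $\partial(\barC\cap B^k)$ to $S^{k-1}$ and the setup of Definition \ref{degreemap} is well-posed.

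Next, by hypothesis $U^{C'}$ has no zeros on $\barC\cap B^k$, so the Gauss map
\[
G:=\frac{U^{C'}}{|U^{C'}|}\colon \barC\cap B^k\longrightarrow S^{k-1}
\]
is defined and continuous on the whole of $\barC\cap B^k$, not merely on the boundary. Its restriction to $\partial(\barC\cap B^k)$ is exactly the map whose degree defines $\deg(C,C')$.

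Finally, since $\barC\cap B^k$ is contractible, the extended map $G$ is null-homotopic; restricting a null-homotopy to the boundary shows $G|_{\partial(\barC\cap B^k)}$ is itself null-homotopic, hence of degree zero. Equivalently, the fundamental class of $\partial(\barC\cap B^k)\simeq S^{k-1}$ bounds in $\barC\cap B^k$, so $G_*[\partial(\barC\cap B^k)]=0$ in $H_{k-1}(S^{k-1})$, which gives $\deg(C,C')=0$. There is no real obstacle here; the statement is essentially a direct consequence of the definition once the convex-geometric fact that $\barC\cap B^k$ is a topological $k$-disc is recorded.
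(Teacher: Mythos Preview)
Your proof is correct and follows essentially the same argument as the paper: the Gauss map extends continuously from $\partial(\barC\cap B^k)$ to all of $\barC\cap B^k$ because $U^{C'}$ is nowhere zero there, and since this region is contractible the boundary restriction is null-homotopic, giving degree zero. You have simply made explicit the convexity argument that $\barC\cap B^k$ is a topological $k$-disc, which the paper leaves implicit.
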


\begin{example}
\label{ex:pointing}
Let $p_0\in F^k$ such that $p_0\notin\bigcup_{H\in\A}H\cup\partial B^k$. 
Define the pointing vector field $U^{p_0}$ by 
\begin{equation}
U^{p_0}(x)=\overrightarrow{x; p_0}\in T_xF^k, 
\end{equation}
where $\overrightarrow{x; p_0}$ is a tangent vector at $x$ pointing 
$p_0$ (see Figure \ref{fig:pointingvf}). 
The vector field $U^{p_0}$ is directed to the chamber which contains $p_0$. 
Note that $U^{p_0}(x)=0$ if and only if $x=p_0$. 
Hence if $p_0\notin C\cap B^k$, the Gauss map 
$\frac{U^{p_0}}{|U^{p_0}|}:\partial(\barC\cap B^k)\longrightarrow S^{k-1}$ 
has 
$\deg\left(\frac{U^{p_0}}{|U^{p_0}|}\right)=0$. Otherwise, if 
$p_0\in C\cap B^k$, $\deg\left(\frac{U^{p_0}}{|U^{p_0}|}\right)=(-1)^k$. 

\begin{figure}[htbp]
\centering
\begin{tikzpicture}[scale=1]


\draw [thick] (0,1)--(8,1); 
\draw [thick] (0,4)--(8,4); 
\draw [thick] (1,0)--(1,5); 
\draw [thick] (7,0)--(7,5); 
\draw [thick] (0,0.5)--(8,4.5);


\draw[->, thick] (1,4)--(2.25,3.5);
\draw[->, thick] (1,3)--(2.25,2.75);
\draw[->, thick] (1,2)--(2.25,2);
\draw[->, thick] (1,1)--(2.25,1.25);

\draw[->, thick] (2,4)--(3,3.5);
\draw[->, thick] (2,3)--(3,2.75);
\draw[->, thick] (2,1)--(3,1.25);

\draw[->, thick] (3,4)--(3.75,3.5);
\draw[->, thick] (3,3)--(3.75,2.75);
\draw[->, thick] (3,2)--(3.75,2);
\draw[->, thick] (3,1)--(3.75,1.25);

\draw[->, thick] (4,4)--(4.5,3.5);
\draw[->, thick] (4,3)--(4.5,2.75);
\draw[->, thick] (4,2)--(4.5,2);
\draw[->, thick] (4,1)--(4.5,1.25);

\draw[->, thick] (5,4)--(5.25,3.5);
\draw[->, thick] (5,3)--(5.25,2.75);
\draw[->, thick] (5,2)--(5.25,2);
\draw[->, thick] (5,1)--(5.25,1.25);

\draw[->, thick] (6,4)--(6,3.5);
\draw[->, thick] (6,3)--(6,2.75);
\draw[->, thick] (6,1)--(6,1.25);

\draw[->, thick] (7,4)--(6.75,3.5);
\draw[->, thick] (7,3)--(6.75,2.75);
\draw[->, thick] (7,2)--(6.75,2);
\draw[->, thick] (7,1)--(6.75,1.25);

\filldraw[fill=black, draw=black] (6,2) circle (2pt) node[left] {$p_0$};

\end{tikzpicture}
\caption{Pointing Vector field $\frac{1}{4}U^{p_0}$}
\label{fig:pointingvf}
\end{figure}
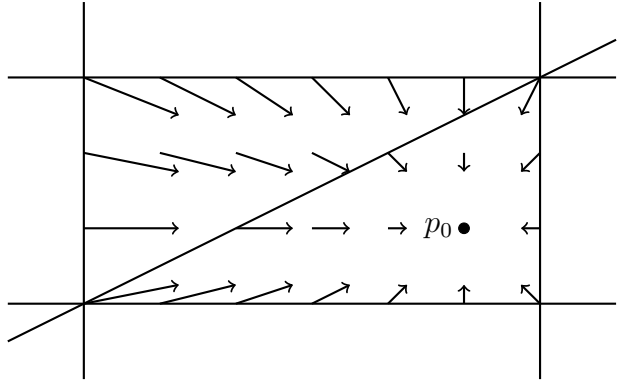
\end{example}

Consider the Orlik-Solomon algebra $A_R^\bullet(\A)$ over 
the commutative ring $R$. Let $\omega_\lambda=\sum_{i=1}^n\lambda_i e_i
\in A_R^1(\A)$, ($\lambda_i\in R$). 
We will describe the Aomoto complex $(A_R^\bullet(\A), \omega_\lambda\wedge)$ 
in terms of chambers. 
For two chambers $C, C'\in\ch(\A)$, define $\lambda_{\Sep(C, C')}$ by 
\[
\lambda_{\Sep(C, C')}:=
\sum_{H_i\in\Sep(C, C')}\lambda_i. 
\]
\begin{proposition}
\label{prop:lambdadense}
Let $C$ be an unbounded chambers. Then 
\[
\lambda_{\Sep(C, C^\lor)}=-\lambda_{X(C)}. 
\]
\end{proposition}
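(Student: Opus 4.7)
The plan is to derive the identity by a direct algebraic manipulation starting from Proposition \ref{prop:charsep} and then using the defining relations $\lambda_\infty = -\sum_{i=1}^n \lambda_i$ together with $\lambda_{X} = \sum_{\barH_i\supset X}\lambda_i$, where the index $i$ ranges over $\{1,\dots,n,\infty\}$. Although Proposition \ref{prop:charsep} was stated for bounded chambers, its geometric argument (running a projective line from $p\in C$ through a relative interior point of $\barC\cap\barH_\infty$ to a point $p''\in C^\lor$) applies verbatim to an unbounded chamber with $\barC\cap\barH_\infty\neq\emptyset$. Thus I would first record
\[
\Sep(C, C^\lor) \;=\; \{H_i\in\A \mid \barH_i\not\supset X(C)\},
\]
and observe that since $\Sep(C,C^\lor)\subset\A$ consists of affine hyperplanes only, the index $\infty$ never appears in this set.

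Next I would expand $\lambda_{\Sep(C,C^\lor)}$ as a sum over the affine indices $i\in\{1,\dots,n\}$ with $\barH_i\not\supset X(C)$, split $\sum_{i=1}^n\lambda_i$ into the two complementary pieces, and substitute $\sum_{i=1}^n\lambda_i=-\lambda_\infty$. Because $X(C)\subset\barH_\infty$, the index $\infty$ automatically satisfies $\barH_\infty\supset X(C)$, so
\[
\lambda_{X(C)} \;=\; \lambda_\infty \;+\; \sum_{\substack{i\in\{1,\dots,n\}\\ \barH_i\supset X(C)}}\lambda_i.
\]
Combining these two expressions, the two $\lambda_\infty$ contributions cancel and the terms with $\barH_i\supset X(C)$ assemble into $-\lambda_{X(C)}$, giving the desired equality.

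The argument is essentially a bookkeeping exercise and there is no real geometric obstacle beyond Proposition \ref{prop:charsep}; the only point requiring attention is the careful distinction between the affine index set $\{1,\dots,n\}$ and the projective one $\{1,\dots,n,\infty\}$, so that the implicit $\lambda_\infty$ inside $\lambda_{X(C)}$ is correctly balanced against the $\lambda_\infty$ introduced via the relation $\sum_{i=1}^n\lambda_i=-\lambda_\infty$. In effect, the proposition simply translates Proposition \ref{prop:charsep} from a statement about sets of separating hyperplanes into one about weighted sums.
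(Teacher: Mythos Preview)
Your proof is correct and follows essentially the same route as the paper: both arguments rest on Proposition~\ref{prop:charsep}, which gives the disjoint decomposition $\barA=\barA_{X(C)}\sqcup\Sep(C,C^\lor)$, and then use the relation $\sum_{i\in\{1,\dots,n,\infty\}}\lambda_i=0$ to conclude $\lambda_{\Sep(C,C^\lor)}+\lambda_{X(C)}=0$. Your version simply unpacks the bookkeeping between the affine and projective index sets more explicitly, and your remark that the hypothesis ``$C\in\bch(\A)$'' in Proposition~\ref{prop:charsep} is a typo (the argument requires $\barC\cap\barH_\infty\neq\emptyset$, i.e.\ $C$ unbounded) is well taken.
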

\begin{proof}
By Proposition \ref{prop:charsep}, we have 
$\barA=\barA_{X(C)}\sqcup\Sep(C, C^\lor)$. Hence, 
from the definition of $\lambda_{\infty}=-\sum_{i=1}^n\lambda_i$, 
we obtain 
$\lambda_{\Sep(C, C^\lor)}+\lambda_{X(C)}=0$. 
\end{proof}

Let $R[\ch^k(\A)]=\bigoplus_{C\in\ch^k(\A)}R\cdot[C]$ 
be the free $R$-module generated by $\ch^k(\A)$. 
Let $\nabla_{\omega_\lambda}: R[\ch^k(\A)] \longrightarrow R[\ch^{k+1}(\A)]$ 
be the $R$-homomorphism defined by 
\begin{equation}
\label{eq:defwedge}
\nabla_{\omega_\lambda}([C])= 
\displaystyle{\sum_{C'\in \ch^{k+1}}} \deg(C,C')\cdot 
\lambda_{\Sep(C, C')}\cdot [C'].
\end{equation}

\begin{proposition}
\label{prop:aomotochamber}
(\cite{y-cham})
$(R[\ch^\bullet(\A)],\nabla_{\omega_\lambda})$ is a cochain complex. Furthermore, there is a natural isomorphism of cochain complexes,
\[
(R[\ch^\bullet(\A)],\nabla_{\omega_\lambda}) 
\simeq (A^\bullet_R(\A),\omega_\lambda\wedge). 
\]
In particular, 
\[
H^k
(R[\ch^\bullet(\A)],\nabla_{\omega_\lambda}) 
\simeq 
H^k(A^\bullet_R(\A),\omega_\lambda\wedge). 
\]
\end{proposition}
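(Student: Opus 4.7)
The goal is to exhibit an $R$-linear isomorphism $\Phi\colon R[\ch^\bullet(\A)]\to A^\bullet_R(\A)$ of cochain complexes. Once this is in hand, both claims of the proposition follow at once: $\nabla_{\omega_\lambda}$ automatically squares to zero because $\omega_\lambda\wedge$ does. Both sides are free $R$-modules of the same rank $b_k(M(\A))$ in degree $k$, by Proposition \ref{prop:bchuch}(i) on the chamber side and by the classical Orlik--Solomon theorem on the algebra side, so the real content is choosing the bijection with geometric meaning so that the two differentials match.

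I would construct $\Phi$ using the generic flag $\FF$ near $\barH_\infty$. For $C\in\ch^k(\A)$, the slice $C\cap F^k$ is a $k$-dimensional cell of $\A\cap F^k$ that is not yet visible at level $F^{k-1}$. The ``near $\barH_\infty$'' condition singles out a canonical bounded side for each such cell, hence a canonical set of $k$ hyperplanes $H_{i_1},\dots,H_{i_k}\in\A$ whose traces on $F^k$ bound $C\cap F^k$ on that side. The assignment $[C]\mapsto \pm\, e_{i_1}\wedge\cdots\wedge e_{i_k}$ (sign from a fixed orientation of $F^k$) defines $\Phi$. To see that $\Phi$ is an $R$-module isomorphism, I would argue by induction on $\ell$, combining the flag-induced decomposition $\ch(\A)=\bigsqcup_k\ch^k(\A)$ with the Brieskorn decomposition of $A^\bullet_R(\A)$; this matches the two sides stratum by stratum and identifies the chamber basis with an $\mathbf{nbc}$-type basis adapted to $\FF$.

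The main obstacle is the compatibility with differentials: one must show
\[
\omega_\lambda\wedge\Phi([C])=\sum_{C'\in\ch^{k+1}(\A)}\deg(C,C')\cdot\lambda_{\Sep(C,C')}\cdot\Phi([C']).
\]
The plan is to expand the left-hand side using the Orlik--Solomon relations: for each $i$, rewrite $e_i\wedge\Phi([C])$ in the chamber basis. Only hyperplanes $H_i$ lying on the boundary of some adjacent chamber $C'$ contribute, and an $H_i$ contributes to the coefficient of $\Phi([C'])$ exactly when $H_i\in\Sep(C,C')$, so summing over $i$ yields $\lambda_{\Sep(C,C')}$ as the coefficient of $\Phi([C'])$, up to a sign. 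The remaining task is identifying this sign with the topological degree $\deg(C,C')$ of Definition \ref{degreemap}. I would do this by reducing to the one-dimensional base case: on $F^{k+1}\cap\partial(\barC\cap B^{k+1})$ one can localize the Gauss map of $U^{C'}$ to each boundary face, where, as in Example \ref{ex:pointing}, the degree reduces to a $\pm 1$ sign recording on which side of the bounding hyperplane $C'$ lies. This local sign matches, by a direct orientation comparison, the algebraic sign produced by the Orlik--Solomon relation. An induction on $\ell$ along the flag then propagates the identity from the one-dimensional case to all $k$, completing the verification and hence the proof.
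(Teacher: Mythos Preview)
The paper does not actually prove Proposition~\ref{prop:aomotochamber}: it is stated with the citation \cite{y-cham} and no argument is given. So there is no ``paper's own proof'' to compare against; the result is imported wholesale from that reference.

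As for your outline itself, it is pointed in the right direction but two steps are genuinely incomplete rather than merely compressed. First, your definition of $\Phi$ is not well-posed as written: a chamber $C\cap F^k$ typically has many more than $k$ facets, so ``the canonical set of $k$ hyperplanes whose traces bound $C\cap F^k$ on that side'' does not pick out a unique $k$-tuple without further input. In \cite{y-cham} the map goes in the other direction: one defines $A_R^k(\A)\to R[\ch^k(\A)]$ via an explicit pairing (a signed count over chambers), and the chamber basis of $A_R^\bullet(\A)$ is obtained only after proving this pairing is perfect. Trying to write down the inverse directly, monomial by monomial, is harder than it looks.

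Second, your verification of the intertwining identity is where the real work lies, and your sketch does not meet it. The claim ``only hyperplanes $H_i$ lying on the boundary of some adjacent chamber $C'$ contribute'' is false at the level of individual products $e_i\wedge\Phi([C])$: rewriting such a product in any basis of $A_R^{k+1}(\A)$ generally spreads over many terms, not just adjacent chambers. The cancellation that produces exactly $\lambda_{\Sep(C,C')}$ with coefficient $\deg(C,C')$ is a global phenomenon, and in \cite{y-cham} it is handled by working with the pairing (where the degree map enters naturally as an incidence number) rather than by local rewriting in the Orlik--Solomon algebra. Your ``localize the Gauss map to each boundary face'' idea is appealing but would need a careful additivity argument for the degree under the facet decomposition of $\partial(\barC\cap B^k)$, which you have not supplied.
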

Let $\LL$ be a rank one local system on $M(\A)$ which 
has monodromy $q_i\in\C^\times$ ($i=1, \dots, n$) around $H_i$. 
Fix $q_i^{1/2}=\sqrt{q_i}$ and define $q_{\infty}^{1/2}$ and 
$\Delta(C, C')$ by 
$q_{\infty}^{1/2}:=\left(q_1^{1/2}\cdots q_n^{1/2}\right)^{-1}$ and 
\[
\Delta(C, C'):=
\prod_{H_i\in\Sep(C, C')}q_i^{1/2}-
\prod_{H_i\in\Sep(C, C')}q_i^{-1/2}, 
\]
respectively. Then the local system cohomology group 
can be computed in a similar way to the Aomoto complex. 
Indeed, let us define the linear map 
$\nabla_{\LL}:\C[\ch^k(\A)]\longrightarrow\C[\ch^{k+1}(\A)]$ by 
\[
\nabla_{\LL}([C])= 
\sum_{C'\in \ch^{k+1}} \deg(C,C')\cdot 
\Delta(C, C')\cdot [C'].
\]
Then we have the following. 

\begin{proposition}
\label{prop:localchamber}
(\cite{y-lef})
$(\C[\ch^\bullet(\A)],\nabla_{\LL})$ is a cochain complex. 
Furthermore, there is a natural isomorphism of cohomology groups: 
\[
H^k(\C[\ch^\bullet(\A)],\nabla_{\LL}) \simeq H^k(M(\A), \LL).
\]
\end{proposition}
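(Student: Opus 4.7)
The plan is to realize the complex $(\C[\ch^\bullet(\A)],\nabla_{\LL})$ as the twisted cellular cochain complex of an explicit minimal CW model of $M(\A)$. Thanks to the general minimality of arrangement complements, and to the concrete real-structure construction based on a generic flag $\FF$ near $\barH_\infty$ (the foundation of \cite{y-lef, y-mini} which the paper has already been using), $M(\A)$ has a CW decomposition with exactly one $k$-cell $\sigma_C$ for each $C \in \ch^k(\A)$. That this is a minimal model is consistent with Proposition \ref{prop:bchuch}(i), which gives $\#\ch^k(\A) = b_k(M(\A))$. My first step would therefore be to recall/construct this minimal CW structure, together with a specific choice of lifts $\widetilde{\sigma}_C$ to a cover trivializing $\LL$, so that the local system cochain complex takes the form $\bigoplus_{C\in\ch^k(\A)}\C\cdot[C]$.

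Next I would compute the matrix coefficients of the cellular coboundary with $\LL$-coefficients. For $C \in \ch^k(\A)$ and $C' \in \ch^{k+1}(\A)$, the coefficient of $[C']$ in $\nabla_\LL([C])$ is an incidence number recording how the attaching $k$-sphere of $\sigma_{C'}$ wraps around $\sigma_C$, weighted by the monodromy one picks up from the chosen lifts. The plan is to show that this incidence number factors as a topological degree times a monodromy factor. The degree factor agrees, by design of the CW structure, with $\deg(C,C')$ of Definition \ref{degreemap}: the attaching data are controlled by a vector field on $F^k$ that is directed to $C'$, exactly as in the definition. The monodromy factor will come out as $\Delta(C,C')$, because the attaching sphere traverses each separating hyperplane $H_i \in \Sep(C, C')$ in two opposite directions, picking up $q_i^{+1/2}$ on one side and $q_i^{-1/2}$ on the other, and these assemble into the product difference $\prod q_i^{1/2} - \prod q_i^{-1/2}$.

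Once these matrix coefficients are established, the statement that $(\C[\ch^\bullet(\A)],\nabla_\LL)$ is a cochain complex is automatic from $\partial\circ\partial=0$ on the CW model, and the isomorphism $H^k(\C[\ch^\bullet(\A)],\nabla_\LL) \simeq H^k(M(\A),\LL)$ is just the standard cellular-to-sheaf cohomology identification for local systems, applied to this specific minimal model.

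The main obstacle, and the reason this result is nontrivial, is the explicit identification of the twisted incidence number as the product $\deg(C,C')\cdot\Delta(C,C')$ — in particular the emergence of the \emph{difference} $\Delta(C,C')$ rather than a single monodromy term. This requires tracking the attaching maps along the stratification $\ch^\bullet(\A)$ induced by $\FF$, and carefully choosing square roots $q_i^{1/2}$ compatibly with orientations so that the two boundary traversals of each $H_i \in \Sep(C,C')$ contribute cleanly with opposite half-monodromies. The careful bookkeeping of this sign and monodromy calculation is precisely the technical heart of \cite{y-lef}, and any self-contained proof would have to reproduce it; modulo that input, the remaining assertions of the proposition are formal.
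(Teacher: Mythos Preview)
Your outline is a faithful sketch of the argument in \cite{y-lef}, but note that the paper itself does not supply a proof of this proposition: it is stated as a quoted result with the citation \cite{y-lef} and used as a black box. So there is nothing to compare against beyond the reference, and your plan matches the approach of that reference.
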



\section{Main results and strategy} 

\label{sec:results}

\subsection{Main theorems}

In this section, let $\A=\{H_1, \dots, H_n\}$ be a hyperplane 
arrangement in $\R^\ell$ and $R$ be a commutative ring with $1$.

\begin{theorem}
\label{thmprincipal}
If $\lambda_X \in R^\times$ for all 
$X\in\dense(\barA)$,  
then
\[
H^k(\C[\ch^\bullet(\A)],\nabla_{\omega_\lambda})\simeq
\left\{
\begin{array}{ll}
0,& \mbox{ if }k<\ell, \\
&\\
R[\bch(\A)],& \mbox{ if }k=\ell. 
\end{array}
\right.
\]
\end{theorem}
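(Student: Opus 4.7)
The plan is to work with the chamber-basis model $(R[\ch^\bullet(\A)], \nabla_{\omega_\lambda})$ afforded by Proposition \ref{prop:aomotochamber}, split each term via the decomposition $\ch^k(\A) = \bch^k(\A) \sqcup \uch^k(\A)$, and exploit the bijection $\iota:\bch^{k-1}(\A) \xrightarrow{\simeq} \uch^k(\A)$, $C \mapsto C^\lor$, from Proposition \ref{prop:bchuch}(ii). With respect to these decompositions $\nabla_{\omega_\lambda}$ becomes a block matrix in each degree, and I will focus on the block
\[
D_k: R[\bch^k(\A)] \longrightarrow R[\uch^{k+1}(\A)]
\]
obtained by restricting $\nabla_{\omega_\lambda}$ to $R[\bch^k(\A)]$ and projecting to $R[\uch^{k+1}(\A)]$. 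The heart of the proof is to show that $D_k$ is an $R$-linear isomorphism for every $k = 0, 1, \dots, \ell - 1$.

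Granting the isomorphisms $D_k$, a standard homological-algebra step concludes the argument: pairing each $[C] \in R[\bch^k(\A)]$ with $[C^\lor] \in R[\uch^{k+1}(\A)]$ via $\iota$ and performing iterated Gaussian elimination on these canceling pairs produces a quasi-isomorphic complex whose only nonzero term is $R[\bch^\ell(\A)] = R[\bch(\A)]$, placed in degree $\ell$. This yields $H^k = 0$ for $k < \ell$ and $H^\ell \simeq R[\bch(\A)]$.

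To attack $D_k$, I will order the bases of $R[\bch^k(\A)]$ and $R[\uch^{k+1}(\A)]$ compatibly through $\iota$, and examine the diagonal entry at $C \in \bch^k(\A)$, namely $\deg(C, C^\lor)\cdot\lambda_{\Sep(C, C^\lor)}$. For $k < \ell$, every $C \in \bch^k(\A)$ is an unbounded chamber of $\A$ in $\R^\ell$ (all bounded chambers sit inside $\bch^\ell(\A) = \bch(\A)$ thanks to the near-$\barH_\infty$ hypothesis on $\FF$), so Proposition \ref{prop:lambdadense} applies and gives $\lambda_{\Sep(C, C^\lor)} = -\lambda_{X(C)}$; by Proposition \ref{densechamber}, $X(C) \in \dense(\barA)$, whence $\lambda_{X(C)} \in R^\times$ by hypothesis. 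A direct geometric calculation with the Gauss map of a vector field directed to $C^\lor$ on the bounded $k$-polytope $\barC \cap F^k$ should yield $\deg(C, C^\lor) = \pm 1$, so each diagonal entry of $D_k$ is a unit in $R$.

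The main obstacle is to show that, for a suitably chosen total order on $\bch^k(\A)$ induced by the generic flag $\FF$, the matrix of $D_k$ is triangular with respect to $\iota$, that is, $\deg(C, C'^\lor) \neq 0$ for $C' \in \bch^k(\A) \setminus \{C\}$ forces a strict inequality $C' < C$ (say) in that order. This is a polyhedral-geometric statement about which $(k+1)$-dimensional chambers are ``seen'' by the boundary of $\barC \cap B^k \subset F^k$, and carrying it out rigorously is the content of Section \ref{sec:proofs}.
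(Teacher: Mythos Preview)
Your proposal is correct and mirrors the paper's strategy essentially step for step: the block map you call $D_k$ is exactly the paper's $\overline{\nabla}_{\omega_\lambda}$, and the paper likewise reduces everything to showing this map has unit diagonal entries (via Propositions \ref{prop:lambdadense} and \ref{densechamber}, together with the degree formula $\deg(C,C^\lor)=(-1)^{\ell-1-\dim X(C)}$) and is upper-triangular for a suitable ordering. The only point where you are vaguer than the paper is the choice of order: the paper orders $\bch^{k}(\A)$ by decreasing $\dim X(C)$ and then proves triangularity by constructing, for each pair $(C,D)$ with $\dim X(D)\geq\dim X(C)$ and $C\neq D$, an explicit nowhere-vanishing vector field on $\barC\cap F^k$ directed to $D^\lor$, forcing $\deg(C,D^\lor)=0$.
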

More generally, we can prove the following. 

\begin{corollary}
\label{cor:principal}
Let $0\leq p<\ell$. 
If $\lambda_X \in R^\times$ for all 
$X\in \dense(\barA)$ with $\dim(X)\geq p,$ then
\[
H^k(\C[\ch^\bullet(\A)],\nabla_{\omega_\lambda})=0, \mbox{ for all }
0\leq k < \ell-p.
\]
\end{corollary}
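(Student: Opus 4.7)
The plan is to deduce Corollary \ref{cor:principal} from Theorem \ref{thmprincipal} by restricting $\A$ to the generic $(\ell-p)$-dimensional affine subspace $F^{\ell-p}$ of the chosen flag. Set $m = \ell - p$ and consider the restricted arrangement $\A|_{F^m} := \{H \cap F^m \mid H \in \A\}$ inside $F^m$. The induced flag $F^0 \subset F^1 \subset \cdots \subset F^m$ is itself generic and near $\overline{F^m} \cap \barH_\infty$ as a flag in $F^m$, since the conditions defining genericity and near-infinity for $\FF$ in $\R^\ell$ simply restrict to $F^m$.

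First, I would show that restriction to $F^m$ identifies the two Aomoto complexes in low degrees: the assignment $C \mapsto C \cap F^m$ gives a bijection $\ch^j(\A) \xrightarrow{\sim} \ch^j(\A|_{F^m})$ for every $j \leq m$, and the coboundary maps $\nabla_{\omega_\lambda}$ agree under this identification. The bijection is immediate from the definition, since $C \cap F^j \neq \emptyset$ forces $C \cap F^m \neq \emptyset$ whenever $j \leq m$. For the coboundary maps, the degree $\deg(C, C')$ is computed from a vector field on $F^j \subset F^m$, so its value is independent of whether $F^j$ is regarded as sitting in $\R^\ell$ or in $F^m$; likewise, for chambers meeting $F^m$, a hyperplane $H \in \A$ separates $C$ and $C'$ in $\R^\ell$ if and only if $H \cap F^m$ separates $C \cap F^m$ and $C' \cap F^m$ in $F^m$, so $\lambda_{\Sep(C,C')}$ is preserved. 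Consequently, for every $j \leq m - 1$,
\[
H^j\bigl(R[\ch^\bullet(\A)], \nabla_{\omega_\lambda}\bigr) \simeq H^j\bigl(R[\ch^\bullet(\A|_{F^m})], \nabla_{\omega_\lambda}\bigr).
\]

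Next, I would verify that $\A|_{F^m}$ satisfies the hypothesis of Theorem \ref{thmprincipal}. By the genericity of $F^m$, the assignment $X \mapsto X \cap \overline{F^m}$ gives a bijection between $\{X \in \dense(\barA) \mid \dim X \geq p\}$ and the set of dense edges of $\overline{\A|_{F^m}}$ contained in its hyperplane at infinity $\overline{F^m} \cap \barH_\infty$, because localization commutes with generic intersection and indecomposability is preserved. Furthermore, generic intersection forces $\barH_i \supset X \cap \overline{F^m}$ to be equivalent to $\barH_i \supset X$, so $\lambda_{X \cap \overline{F^m}} = \lambda_X$, which lies in $R^\times$ by the hypothesis of the corollary.

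Applying Theorem \ref{thmprincipal} to $\A|_{F^m}$ then yields vanishing of its Aomoto cohomology in every degree strictly below $m = \ell - p$, and combined with the first step this gives $H^k(R[\ch^\bullet(\A)], \nabla_{\omega_\lambda}) = 0$ for $0 \leq k < \ell - p$, as required. The main obstacle I expect is the careful verification of the dense-edge correspondence under generic restriction — in particular, that indecomposability of the localization passes through $X \mapsto X \cap \overline{F^m}$ — together with fixing orientation conventions on $F^j$ coherently so that the degree maps, and hence the coboundary maps of the two complexes, agree on the nose rather than merely up to sign.
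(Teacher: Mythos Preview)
Your proposal is correct and follows the same strategy as the paper: restrict to the generic subspace $F^{\ell-p}$ and apply Theorem~\ref{thmprincipal} to the restricted arrangement. The only minor difference is that the paper phrases the low-degree identification at the level of Orlik--Solomon algebras (using $A_R^\bullet(\A\cap F^{\ell-p})\simeq A_R^{\leq \ell-p}(\A)$ together with Proposition~\ref{prop:aomotochamber}), whereas you verify it directly at the level of the chamber complexes; the dense-edge correspondence that you flag as the main point to check is likewise asserted in the paper via $L(\A\cap F^{\ell-p})\simeq L^{\geq p}(\A)$ without further comment.
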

\proof
Here we give a proof of Corollary \ref{cor:principal} based on 
the main Theorem \ref{thmprincipal}. 
If we consider $\A\cap F^{\ell-p}$. The Orlik-Solomon algebra 
$A_R^{\bullet}(\A\cap F^{\ell-p})$ is isomorphic to 
$A_R^{\leq \ell-p}(\A)$. Hence we have an isomorphism 
\begin{equation}
\label{eq:propagate}
H^k(A_R^{\bullet}(\A\cap F^{\ell-p}), \omega_\lambda\wedge)\simeq
H^k(A_R^{\bullet}(\A), \omega_\lambda\wedge), 
\end{equation}
for $k<\ell-p$. Note that $L(\A\cap F^{\ell-p})\simeq L^{\geq p}(\A)$. 
By the assumption, we have $\lambda_X\in R^\times$ for 
any $X\in\dense(\A\cap F^{\ell-q})$. Hence by Theorem \ref{thmprincipal}, 
the left hand side of (\ref{eq:propagate}) is vanishing. 
\endproof

By Proposition \ref{prop:aomotochamber}, we have the following 
vanishing theorem for the Aomoto complex. 
\begin{corollary}
Let $0\leq p<\ell$. 
If $\lambda_X \in R^\times$ for all 
$X\in \dense(\barA)$ with $\dim(X)\geq p,$ then
\[
H^k(A_R^\bullet(\A),\omega_\lambda\wedge)=0, \mbox{ for all }
0\leq k < \ell-p.
\]
\end{corollary}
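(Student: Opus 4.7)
The plan is to derive this corollary immediately from the previous two results in the section, namely Corollary~\ref{cor:principal} and Proposition~\ref{prop:aomotochamber}. The point is that the chamber cochain complex $(R[\ch^\bullet(\A)], \nabla_{\omega_\lambda})$ and the Aomoto complex $(A^\bullet_R(\A), \omega_\lambda\wedge)$ are isomorphic as cochain complexes, so their cohomology groups agree degree by degree. Thus the vanishing statement on the chamber side transports to the same vanishing statement on the Aomoto side without any additional hypothesis.

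Concretely, I would first invoke Proposition~\ref{prop:aomotochamber} to obtain the isomorphism
\[
H^k(R[\ch^\bullet(\A)], \nabla_{\omega_\lambda}) \simeq H^k(A^\bullet_R(\A), \omega_\lambda\wedge)
\]
for every $k \geq 0$. Then, under the standing hypothesis that $\lambda_X \in R^\times$ for all dense edges $X \in \dense(\barA)$ with $\dim(X) \geq p$, Corollary~\ref{cor:principal} gives that the left-hand side vanishes for all $0 \leq k < \ell - p$. Combining these two facts yields the required vanishing of $H^k(A^\bullet_R(\A), \omega_\lambda\wedge)$ in the same range.

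There is essentially no obstacle at this step: the corollary is a purely formal translation of Corollary~\ref{cor:principal} through the chamber description. All of the substantive work has already been carried out earlier, namely the propagation argument using the subarrangement $\A\cap F^{\ell-p}$ that establishes Corollary~\ref{cor:principal}, and, more fundamentally, the proof of Theorem~\ref{thmprincipal} which rests on the polyhedral analysis developed in Section~\ref{sec:proofs}. Only the identification of the two cochain complexes from Proposition~\ref{prop:aomotochamber} remains to be applied.
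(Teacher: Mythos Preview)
Your proposal is correct and matches the paper's own approach: the paper simply states that by Proposition~\ref{prop:aomotochamber} the vanishing of Corollary~\ref{cor:principal} transfers to the Aomoto complex, which is exactly what you do.
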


\begin{remark}
\label{rem:locsyscdo}
Completely similar proof works also for the case of local systems. 
Namely, if the local system $\LL$ satisfies that $q_X \neq 1$ for all 
$X\in \dense(\barA)$ with $\dim(X)\geq p,$ then 
\[
H^k(\C[\ch^\bullet(\A)],\nabla_{\LL})=0, \mbox{ for all }k<\ell-p. 
\]
Using Proposition \ref{prop:localchamber}, this implies 
\[
H^k(M(\A), \LL)=0, \mbox{ for all }k<\ell-p, 
\]
which gives an alternative proof for Theorem \ref{thm:cdo} 
by Cohen, Dimca and Orlik. 
\end{remark}

\subsection{Strategy for the proof of Theorem \ref{thmprincipal}}

\label{subsec:strategy}

In order to analyze the cohomology group,
\[
H^k(R[\ch^\bullet(\A)], \nabla_\omega)=
\frac
{\ker\left(\nabla_\omega:R[\ch^k(\A)]\longrightarrow R[\ch^{k+1}(\A)]\right)}
{\image\left(\nabla_\omega:R[\ch^{k-1}(\A)]\longrightarrow R[\ch^{k}(\A)]\right)}, 
\]
we will use the direct decomposition 
$R[\ch^k(\A)]=R[\bch^k(\A)]\oplus R[\uch^k(\A)]$, and then consider 
the map 
\begin{equation}
\overline{\nabla}_{\omega_{\lambda}}: 
R[\bch^k(\A)] \hookrightarrow R[\ch^{k}(\A)] 
\stackrel{\nabla_\omega}{\longrightarrow} 
R[\ch^{k+1}(\A)]\twoheadrightarrow R[\uch^{k+1}(\A)]. 
\end{equation}
We will study the map 
$\overline{\nabla}_{\omega_{\lambda}}: 
R[\bch^k(\A)] \longrightarrow R[\uch^{k+1}(\A)]$ in detail below. 
Recall that there is a natural bijection $\iota: 
\bch^k(\A)\stackrel{\simeq}{\longrightarrow}\uch^{k+1}(\A)$ 
(see Proposition \ref{prop:bchuch} and subsequent remarks), 
once we fix an ordering $C_1, \dots, C_b$ of $\bch^k(\A)$, 
we obtain a matrix expression of the map 
$\overline{\nabla}_{\omega_{\lambda}}$.  
We will prove the following. 

\begin{itemize}
\item[(i)] 
Let $C\in\bch^k(\A)$. Then 
$\deg(C, C^\lor)=(-1)^{\ell-1-\dim X(C)}$. 
\item[(ii)] 
For an appropriate ordering of $\bch^k(\A)=\{C_1, \dots, C_b\}$, 
the matrix expression of 
$\overline{\nabla}_{\omega_{\lambda}}:R[\bch^{k}(\A)]\longrightarrow
R[\uch^{k+1}(\A)]$ becomes 
an upper-triangular matrix. 
\item[(iii)] 
$\det\barnabla_\omega\in R^\times$
\item[(iv)] 
These imply Theorem \ref{thmprincipal}. 
\end{itemize}
(i) and (ii) will be proved in \S \ref{sec:proofs}. 

Here we prove (iii) and (iv) based on (i) and (ii). 
First note that from Proposition \ref{prop:lambdadense}, the 
definition (\ref{eq:defwedge}) of the coboundary map of the 
complex $(R[\ch^\bullet(\A)], \nabla_\omega)$, 
and uppertriangularity (ii) above, we have 
\[
\det\barnabla_\omega=\pm\prod_{C\in\bch^k(\A)}\deg(C, C^\lor)\lambda_{X(C)}. 
\]
From the assumption that $\lambda_{X}\in R^\times$ for 
$X\in\dense(\A)$ (see also Proposition \ref{densechamber}), 
we obtain (iii). 
Since 
$\barnabla_\omega:R[\bch^k(\A)]\stackrel{\simeq}{\longrightarrow} 
R[\uch^{k+1}(\A)]$ 
is an isomorphism of free $R$-modules, which are diagonals of 
the following diagram, 
we have $H^k(R[\ch^\bullet(\A)], \nabla_{\omega})=0$ for $k<\ell$ and 
$H^\ell(R[\ch^\bullet(\A)], \nabla_{\omega})\simeq R[\bch^\ell(\A)]$. 
\[
\begin{array}{ccccccccccc}
R[\ch^0]&\stackrel{\nabla_\omega}{\longrightarrow}&R[\ch^1]&\stackrel{\nabla_\omega}{\longrightarrow}&\cdots&\stackrel{\nabla_\omega}{\longrightarrow}&R[\ch^k]&\stackrel{\nabla_\omega}{\longrightarrow}&R[\ch^{k+1}]&\stackrel{\nabla_\omega}{\longrightarrow}&\cdots\\
||&&||&&&&||&&||&&\\
R[\bch^0]&&R[\bch^1]&&\cdots&&R[\bch^k]&&R[\bch^{k+1}]&&\\
&\searrow&\oplus&\searrow&&\searrow&\oplus&\searrow&\oplus&&\\
&&R[\uch^1]&&\cdots&&R[\uch^k]&&R[\uch^{k+1}]&&
\end{array}
\]

\section{Proofs}

\label{sec:proofs}

In this section, we prove (i) and (ii) in \S \ref{subsec:strategy} for 
$k=\ell-1$. Namely: 
\begin{itemize}
\item[(i')] For a chamber $C\in\bch^{\ell-1}(\A)$, 
$\deg(C, C^\lor)=(-1)^{\ell-1-\dim X(C)}$. 
\item[(ii')] For an appropriate ordering of $\{C_1, \dots, C_b\}=
\bch^{\ell-1}(\A)$, the matrix expression of $\overline{\nabla}_{\omega_{\lambda}}:R[\bch^{\ell-1}(\A)]\longrightarrow R[\uch^\ell(\A)]$ becomes 
an upper-triangular matrix. 
\end{itemize}
For other $k<\ell$, the assertions are proved by a similar way using 
the generic section by $F^{k+1}$ (see the argument of the proof of 
Corollary \ref{cor:principal}). 

\subsection{Structure of Walls}

For simplicity we will set $F=F^{\ell-1}$. Recall that 
$\bch^{\ell-1}(\A)=\{C\in\ch(\A)\mid C\cap F\mbox{ is a bounded 
chamber of $F\cap\A$}\}$. 
Let $C\in\bch^{\ell-1}(\A)$. A hyperplane $H\in\A$ is said to be 
a wall of $C$ if $H\cap F$ is a supporting hyperplane of 
a facet of $\barC\cap F$. 
For any $C\in \bch^{\ell-1}(\A)$, we denote by 
$\wall(C)$ the set of all walls of $C$. 

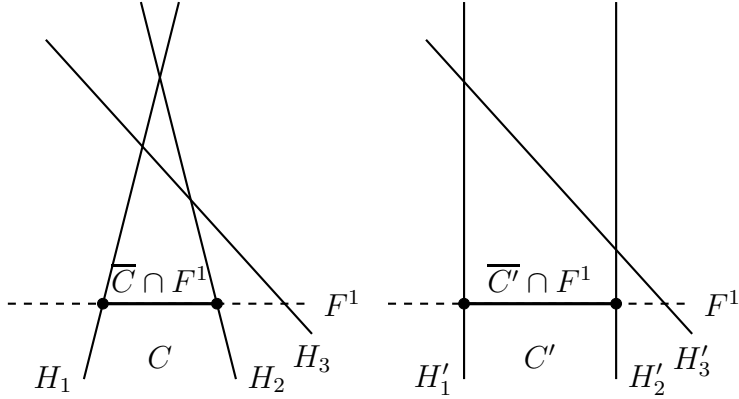
\begin{figure}[htbp]
\centering
\begin{tikzpicture}[scale=1]


\draw[dashed,thick] (0,1)--(4,1) node[right] {$F^1$};
\draw[thick] (1,0) node[left] {$H_1$} -- (2.25,5);
\draw[thick] (3,0) node[right] {$H_2$}--(1.75,5);
\draw[thick] (4,0.6) node[below] {$H_3$} --(0.5,4.5); 
\filldraw[fill=black, draw=black] (1.25,1) circle (2pt);
\filldraw[fill=black, draw=black] (2.75,1) circle (2pt);
\draw[very thick] (1.25,1)--node[above] {$\overline{C}\cap F^1$} (2.75,1); 
\draw (2,0) node [above] {$C$}; 

\draw[dashed,thick] (5,1)--(9,1) node[right] {$F^1$};
\draw[thick] (6,0) node[left] {$H'_1$} -- (6,5);
\draw[thick] (8,0) node[right] {$H'_2$}--(8,5);
\draw[thick] (9,0.6) node[below] {$H'_3$} --(5.5,4.5); 
\filldraw[fill=black, draw=black] (6,1) circle (2pt);
\filldraw[fill=black, draw=black] (8,1) circle (2pt);
\draw[very thick] (6,1)--node[above] {$\overline{C'}\cap F^1$} (8,1); 
\draw (7,0) node [above] {$C'$};


\end{tikzpicture}
\caption{\small $\wall(C)=\wall_2(C)=\{H_1, H_2\}, \wall(C')=\wall_1(C')=\{H'_1, H'_2\}$}
\label{fig:walls}
\end{figure}

We divide the set of walls into two types. 

\begin{definition}
\label{def:walls}
A wall $H\in\wall(C)$ is called the first kind if 
$\barH\supset X(C)$. Otherwise $H$ is called a wall of second kind. 
The set of walls of first kind, and second kind are denoted by 
$\wall_1(C)$ and $\wall_2(C)$ respectively. We have 
$\wall(C)=\wall_1(C)\sqcup\wall_2(C)$. 
(See Figure \ref{fig:walls} and \ref{fig:walls12}.) 
\end{definition}

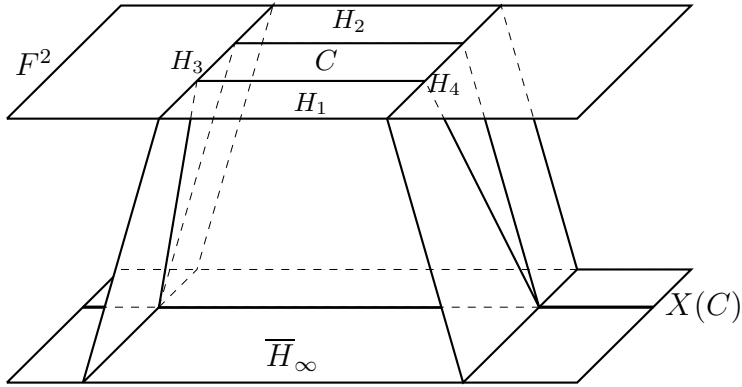
\begin{figure}[htbp]
\centering
\begin{tikzpicture}[scale=1]


\draw[thick] (0,0)--(1.5,1.5)--(9,1.5)--(7.5,0)--node[above]{$\overline{H}_{\infty}$}(0,0);

\draw[very thick] (1,1)--(8.5,1) node[right] {$X(C)$}; 

\filldraw[fill=white, draw=black, thick] (1,0)--(2.5,1.5)--(3.5,5)--(2,3.5)--(1,0);

\filldraw[fill=white, draw=black, thick] (3,4.5)--(6,4.5)--(7,1)--(2,1)--(3,4.5);

\filldraw[fill=white, draw=black, thick] (2.5,4)--(5.5,4)--(7,1)--(2,1)--(2.5,4);

\filldraw[fill=white, draw=black, thick] (5,3.5)--(6,0)--(7.5,1.5)--(6.5,5)--(5,3.5);

\draw[thick] (5.5,4)--(7,1)--(6,4.5);

\filldraw[fill=white, draw=black, thick] (0,3.5)--node [left] {$F^2$}(1.5,5)--(9,5)--(7.5,3.5)-- (0,3.5);

\draw[thick] (2,3.5)-- node [left] {\footnotesize $H_3$} (3.5,5); 
\draw[thick] (5,3.5)-- node [below] {\footnotesize $H_4$} (6.5,5); 
\draw[thick] (2.5,4)-- node [below] {\footnotesize $H_1$} (5.5,4); 
\draw[thick] (3,4.5)-- node [above] {\footnotesize $H_2$} (6,4.5); 
\draw[thick] (4.2,4) node[above] {\small $C$}; 

\draw[dashed, very thin] (1.5,1.5)--(9,1.5);

\draw[dashed, very thin] (1,1)--(8.5,1) ;

\draw[dashed, very thin] (2,1)--(2.5,1.5)--(3.5,5);

\draw[dashed, very thin] (2,1)--(3,4.5)--(6,4.5)--(7,1);

\draw[dashed, very thin] (2,1)--(2.5,4)--(5.5,4)--(7,1);

\draw[dashed, very thin] (7.5,1.5)--(6.5,5);

\end{tikzpicture}
\caption{$\wall_1(C)=\{H_1, H_2\}, \wall_2(C)=\{H_3, H_4\}$.}
\label{fig:walls12}
\end{figure}

Let $C\in \bch^{\ell-1}(\A)$ and $\wall_1(C)=\{H_{i_1}, \dots, H_{i_k}\}$ the 
walls of first kind. We choose defining equations 
$\alpha_{i_1}, \dots, \alpha_{i_k}$ of $\wall_1(C)$ so that 
\[
C\subset\{\alpha_{i_1}>0\}\cap\dots\cap\{\alpha_{i_k}>0\}. 
\]
Note that $\widetilde{C}:=
\{\alpha_{i_1}>0\}\cap\dots\cap\{\alpha_{i_k}>0\}$ is a chamber of 
$\wall_1(\A)$. 
Let $D\in\uch(\A)$ be another unbounded chamber of $\A$. 
Then $D$ is said to be inside $\wall_1(C)$ if 
\[
D\subset\widetilde{C}=
\{\alpha_{i_1}>0\}\cap\dots\cap\{\alpha_{i_k}>0\}. 
\]
This condition is also equivalent to 
$\Sep(C,D)\cap \wall_1(C)=\emptyset$. 

Recall that the opposite chamber of $C\in\bch^{\ell-1}(\A)$ is defined 
as the opposite chamber with respect to $X(C)\subset\barH_\infty$. 
Using (\ref{eq:charsep}), we have the following. 

\begin{proposition}
\label{prop:cap}
Let $C\in\bch^{\ell-1}(\A)$. Then $\Sep(C, C^\lor)\cap\wall(C)=\wall_2(C)$. 
\end{proposition}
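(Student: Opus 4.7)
The plan is to observe that this proposition is an essentially immediate consequence of Proposition \ref{prop:charsep} combined with Definition \ref{def:walls}: both sides of the desired equation turn out to describe the set of walls $H \in \wall(C)$ satisfying $\barH \not\supset X(C)$.

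Concretely, I would first apply Proposition \ref{prop:charsep} (or the appropriate extension to chambers in $\bch^{\ell-1}(\A)$, see below) to rewrite
\[
\Sep(C, C^\lor) = \{H \in \A \mid \barH \not\supset X(C)\}.
\]
Intersecting with $\wall(C)$ gives
\[
\Sep(C, C^\lor) \cap \wall(C) = \{H \in \wall(C) \mid \barH \not\supset X(C)\},
\]
and by Definition \ref{def:walls} the right-hand side is exactly $\wall_2(C)$, since a wall is of the first kind precisely when $\barH \supset X(C)$ and of the second kind otherwise. Both inclusions $\Sep(C, C^\lor) \cap \wall(C) \subseteq \wall_2(C)$ and $\wall_2(C) \subseteq \Sep(C, C^\lor) \cap \wall(C)$ follow from this identification, using $\wall_2(C) \subseteq \wall(C)$ for the latter.

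The only subtlety, and therefore the main thing to check, is that Proposition \ref{prop:charsep} applies here, since it is stated for $C \in \bch(\A) = \bch^\ell(\A)$ whereas our $C \in \bch^{\ell-1}(\A)$ is an unbounded chamber of $\A$. To handle this I would verify that the geometric argument in the proof of Proposition \ref{prop:charsep} carries over verbatim: $\barC \cap \barH_\infty$ is a nonempty face of positive dimension (since $C$ is unbounded but $C \cap F^{\ell-1}$ is bounded), the projective subspace $X(C) = \langle \barC \cap \barH_\infty\rangle$ makes sense, and the opposite chamber $C^\lor \in \uch^\ell(\A)$ provided by the bijection $\iota$ of Proposition \ref{prop:bchuch} is by construction the chamber opposite to $C$ across $X(C)$. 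With these observations in place, choosing $p \in C$, $p'$ in the relative interior of $\barC \cap \barH_\infty$, and $p'' \in C^\lor$ on the projective line $\langle p,p'\rangle$ reproduces the same argument as before: on $\PP_\R^\ell$ the line meets each hyperplane of $\barA_{X(C)}$ exactly at $p'$, and meets the remaining hyperplanes of $\barA$ at points lying on the affine segment $[p,p'']$, giving $\Sep(C,C^\lor) = \barA \smallsetminus \barA_{X(C)}$ as needed.
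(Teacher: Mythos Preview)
Your proof is correct and takes essentially the same approach as the paper: the paper simply states that the proposition follows ``using (\ref{eq:charsep}),'' and you have accurately unpacked this by intersecting the characterization $\Sep(C,C^\lor)=\{H\in\A\mid \barH\not\supset X(C)\}$ with $\wall(C)$ and invoking Definition~\ref{def:walls}. Your care in noting that Proposition~\ref{prop:charsep} must be applied to $C\in\bch^{\ell-1}(\A)$ rather than $\bch(\A)$ is well placed, and the paper handles this implicitly via the sentence just before the proposition (``Recall that the opposite chamber of $C\in\bch^{\ell-1}(\A)$ is defined as the opposite chamber with respect to $X(C)\subset\barH_\infty$'') together with the bijection $\iota$ in Proposition~\ref{prop:bchuch}.
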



\begin{remark}\label{rkinside}
Let $C\in\bch^{\ell-1}(\A)$. 
If $D$ is inside the walls of $\wall_1(C),$ then we have 
$X(D)\subset X(C)$ and 
$\dim X(D)\leq \dim X(C).$
\end{remark}

\subsection{Fibered structure of chambers}

\label{subsec:fiber}

Let $d=\dim X(C)$. 
Let $C\in\bch^{\ell-1}(\A)$. 
As above, we let $\widetilde{C}\in
\ch(\wall_1(C))$ the unique chamber such that $C\subset\widetilde{C}$. 

For each point $p\in\overline{\widetilde{C}}$, 
denote by $G_1(p):=\langle X(C), p\rangle\cap F$ (Figure \ref{fig:basepoly}). 
Then $G_1(p)$ is a $d$-dimensional affine subspace which is 
parallel to each $H\in\wall_1(C)$. Fix a base point 
$p_0\in\widetilde{C}$. We also fix an $(\ell-1-d)$-dimensional subspace 
$G_2(p_0)\subset F$ 
which is passing through $p_0$ and 
transversal to $G_1(p_0)$ (see Figure \ref{fig:basepoly}). 
Let us call 
$Q_0:=G_2(p_0)\cap\overline{\widetilde{C}}$ the base polytope. 

Consider the map 
$\pi_C:\barC\cap F\longrightarrow Q_0, 
p\longmapsto G_1(p)\cap Q_0$. 
For each $q\in Q_0$, the fiber $\pi_C^{-1}(q)=G_1(q)\cap \barC$ 
is a $d$-dimensional polytope. 
This fact is a conclusion of the assumption that $F$ is 
generic and near to $\barH_\infty$, and the following 
elementary proposition. 

\begin{figure}[htbp]
\centering
\begin{tikzpicture}[scale=1]


\draw[thick] (0,0)--(8,0)--(9,4)--(1,4)node[left]{$F$}--(0,0);

\draw[thick] (0.25,1)node[left]{$H_1$}--(8.25,1);
\draw[thick] (0.75,3)node[left]{$H_2$}--(8.75,3);

\draw[thick] (0.5,0.5)--(2.5,3.5);
\draw[thick] (2.5,0.5)--(5.5,3.5);
\draw[thick] (4,3.5)--(5,0.5);
\draw (3,2.3)node[above]{$\overline{C}\cap F$};

\draw[very thick, blue] (0.5,2)--(8.5,2);
\draw[blue] (5.5,2) node[above]{$G_1(p_0)$}; 

\draw[thick,red!70!black] (6.5,0)--(7.5,4);
\draw[red!70!black] (6.6,0.5) node[right]{$G_2(p_0)$};

\filldraw[fill=green!50!black, draw=green!50!black] (6.75,1) circle (2pt);
\filldraw[fill=green!50!black, draw=green!50!black] (7.25,3) circle (2pt);
\draw[green!50!black, very thick] (6.75,1)--(7.25,3) ;
\draw[green!50!black] (7.6,3) node[below] {$Q_0$};

\filldraw[fill=black, draw=black] (7,2) circle (2pt) ;
\draw(7.3,2) node[below] {$p_0$};

\draw[thick, red] (0.3,1.2)--(8.3,1.2) node[right] {$G_1(p)$};
\filldraw[fill=red, draw=red] (2,1.2)  circle (2pt) node[above,red] {$p$}; 
\filldraw[fill=red, draw=red] (6.8,1.2)  circle (2pt) ;
\draw (6.8,1.5) node[left, red] {$\pi_C(p)$};

\end{tikzpicture}
\caption{Base polytope $Q_0$ ($\wall_1(C)=\{H_1, H_2\}$)}
\label{fig:basepoly}
\end{figure}
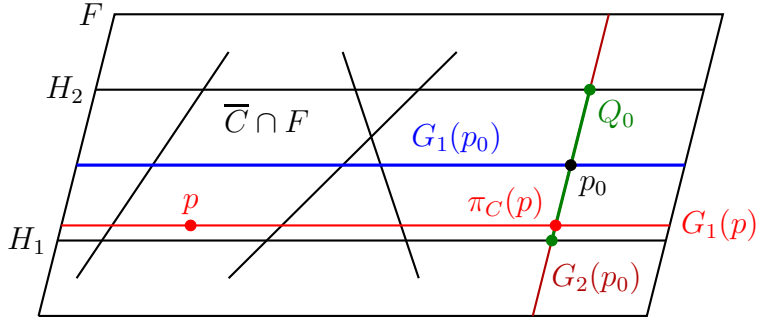

\begin{proposition}
Let $P\subset\R^\ell$ be an $\ell$-dimensional polytope. Let $X\subset P$ be 
a $d$-dimensional face ($0\leq d\leq \ell$). We denote by $\langle X\rangle$ 
the $d$-dimensional affine subspace spanned by $X$. Then for $\varepsilon
\in\R^\ell$ with sufficiently small 
$0\leq |\varepsilon|\ll 1$, 
$(\langle X\rangle+\varepsilon)\cap P$ is either an empty set or 
a $d$-dimensional polytope. 
\end{proposition}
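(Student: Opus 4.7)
The plan is to use the half-space description of $P$ and track how its defining inequalities transform under the translation by $\varepsilon$. First I would write $P = \bigcap_{i=1}^m \{x \in \R^\ell : \alpha_i(x) \leq 0\}$ where $\alpha_1, \ldots, \alpha_m$ are affine forms defining the facet-supporting hyperplanes, and set $I = \{i : X \subset \{\alpha_i = 0\}\}$. Since $P$ is $\ell$-dimensional and $X$ is a $d$-dimensional face, the standard polytope identity $\langle X\rangle = \bigcap_{i \in I}\{\alpha_i = 0\}$ holds, and moreover the relative interior of $X$ in $\langle X\rangle$ is described by $\{y \in \langle X\rangle : \alpha_i(y) < 0 \text{ for all } i \notin I\}$.

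Next I would parameterize points of $\langle X\rangle + \varepsilon$ as $y + \varepsilon$ with $y \in \langle X\rangle$ and evaluate each affine form. Writing $\vec{\alpha}_i$ for the linear part of $\alpha_i$: for $i \in I$ one has $\alpha_i(y) = 0$, so $\alpha_i(y+\varepsilon) = \vec{\alpha}_i \cdot \varepsilon$ is a constant independent of $y$; for $i \notin I$ one has $\alpha_i(y + \varepsilon) = \alpha_i(y) + \vec{\alpha}_i \cdot \varepsilon$. Hence $(\langle X\rangle + \varepsilon) \cap P$ is empty whenever some $i \in I$ satisfies $\vec{\alpha}_i \cdot \varepsilon > 0$; otherwise it is affinely isomorphic, via $y \mapsto y+\varepsilon$, to the polytope
\[
Y_\varepsilon := \{ y \in \langle X\rangle : \alpha_i(y) \leq -\vec{\alpha}_i \cdot \varepsilon \text{ for all } i \notin I \}
\]
sitting inside the $d$-dimensional affine space $\langle X\rangle$.

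Finally I would verify that $Y_\varepsilon$, when non-empty, has dimension $d$. Fix a relative interior point $y_0$ of $X$; by the description above, $\alpha_i(y_0) < 0$ for every $i \notin I$, so $\delta := \min_{i \notin I}(-\alpha_i(y_0)) > 0$. As soon as $|\varepsilon|$ is small enough that $|\vec{\alpha}_i \cdot \varepsilon| < \delta$ for all $i \notin I$, the point $y_0$ satisfies every defining inequality of $Y_\varepsilon$ strictly, so a full open neighborhood of $y_0$ in $\langle X\rangle$ lies in $Y_\varepsilon$, forcing $\dim Y_\varepsilon = d$. The only mildly subtle point is the identification $\langle X\rangle = \bigcap_{i \in I}\{\alpha_i = 0\}$, which uses that $P$ is full-dimensional; once this is in hand, the clean split between the empty and the $d$-dimensional cases is forced by the fact that the $i \in I$ inequalities become conditions on $\varepsilon$ alone after the translation.
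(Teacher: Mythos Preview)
Your argument is correct. The paper itself does not supply a proof of this proposition; it simply labels it ``elementary'' and uses it. Your half-space computation fills that gap cleanly: once you split the facet inequalities into those indexed by $I$ (vanishing on $\langle X\rangle$) and the rest, the $I$-constraints become conditions on $\varepsilon$ alone, and the remaining constraints are strictly satisfied at a relative interior point $y_0$ of $X$ for small $|\varepsilon|$. The only point that deserves a word of justification is the identity $\langle X\rangle=\bigcap_{i\in I}\{\alpha_i=0\}$; since $P$ is full-dimensional and the $\alpha_i$ are the facet-supporting forms, the outer normals of the facets containing $X$ span the normal cone of $X$, which has dimension $\ell-d$, so the intersection of the corresponding hyperplanes has the correct dimension $d$. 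Boundedness of $Y_\varepsilon$ is automatic because $(\langle X\rangle+\varepsilon)\cap P\subset P$.
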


\begin{remark}
\label{rem:section}
Since $\pi_C:\barC\cap F\longrightarrow Q_0$ is a fibration 
with contractible fibers, there exists a continuous 
section $\sigma_C:Q_0\longrightarrow\barC\cap F$ such that 
$\pi_C\circ\sigma_C=\id_{Q_0}$. 
\end{remark}

\subsection{Upper-triangularity}

\label{subsec:uppertri}

Let us fix an ordering of chambers of $\bch^{\ell-1}(\A)=\{C_1,\hdots,C_b\}$ 
in such a way that 
\[
\dim X(C_1)\geq \dim X(C_2) \geq \cdots \geq \dim X(C_b).
\]
The main result in this section is the following. 

\begin{theorem}
\label{thm:triangular}
The matrix $(\deg(C_i, C_j^\lor))_{i, j=1, \dots, b}$ is 
upper-triangular. In other words, if $i>j$, 
$\deg(C_i, C_j^\lor)=0$. 
\end{theorem}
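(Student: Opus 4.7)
My plan is to apply Proposition~\ref{prop:degree0}: for each pair $(i,j)$ with $i > j$ I will construct a vector field $U$ on $F = F^{\ell-1}$ that is directed to $C_j^\lor$ and has no zeros on $\barC_i \cap B^{\ell-1}$, which immediately forces $\deg(C_i, C_j^\lor) = 0$. Recall that under the chosen ordering, $i > j$ means $\dim X(C_i) \leq \dim X(C_j)$.

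The natural case distinction is whether $C_j^\lor$ lies inside $\wall_1(C_i)$, i.e. whether $C_j^\lor \subset \widetilde{C_i}$. Consider first the case $C_j^\lor \not\subset \widetilde{C_i}$, so that there exists $H_0 \in \wall_1(C_i) \cap \Sep(C_i, C_j^\lor)$. Here I exploit the fibered structure $\pi_{C_i}: \barC_i \cap F \to Q_0$ from \S\ref{subsec:fiber}. The key property is that first-kind walls of $C_i$ are parallel to $G_1(p_0)$ (since their projective closures contain $X(C_i)$) and cut out $Q_0$ inside $G_2(p_0)$, whereas second-kind walls bound the fibers $G_1(q) \cap \barC_i$ in the $G_1$-direction. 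I would build $U = U_{G_2} + U_{G_1}$ where $U_{G_2}$ is a nonzero constant vector in $G_2(p_0)$ steering into the $\wall_1(C_i)$-chamber containing $C_j^\lor$ (hence crossing $H_0$ away from $C_i$ in the correct direction), and $U_{G_1}$ is a fiberwise pointing-type vector field in each fiber, oriented so that it points to the $C_j^\lor$-side across second-kind walls. Since $U_{G_2}$ is a nowhere-zero constant vector, the sum $U$ has no zeros on $\barC_i \cap B^{\ell-1}$.

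The harder case is when $C_j^\lor \subset \widetilde{C_i}$. By Remark~\ref{rkinside}, this implies $X(C_j) = X(C_j^\lor) \subseteq X(C_i)$; combined with $\dim X(C_i) \leq \dim X(C_j)$ coming from $i > j$, we conclude $X(C_i) = X(C_j)$. Since $\wall_1(C_i) \subseteq \barA_{X(C_i)} = \barA_{X(C_j)}$, Proposition~\ref{prop:charsep} shows that $C_j$ and $C_j^\lor$ agree across each $H \in \wall_1(C_i)$, so $C_j \subset \widetilde{C_i}$ as well; and since $i > j$ forces $C_i \neq C_j$, they must be separated by some hyperplane in $\A \setminus \wall_1(C_i)$. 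The main obstacle is to construct a nowhere-vanishing $U^{C_j^\lor}$ in this sub-case, since the $G_2$-translation trick from the previous paragraph fails. I expect the construction to proceed fiber-by-fiber in the $G_1$-direction, using a pointing vector field in each $G_1$-fiber of $\barC_i$ together with a small perturbation to guarantee the correct orientation across the remaining hyperplanes of $\A$, the non-vanishing following because the target point of the pointing lies in $C_j^\lor$ and hence outside $\barC_i$.
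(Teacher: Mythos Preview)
Your strategy and case split match the paper's, but both constructions have genuine gaps.

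The sharper gap is in Case~2. You propose a fiberwise pointing field with target in $C_j^\lor$, but $C_j^\lor\in\uch^\ell(\A)$, so by the very definition of $\ch^\ell$ one has $C_j^\lor\cap F^{\ell-1}=\emptyset$: there is no target point in $F$ to point at, and your non-vanishing argument loses its footing. The paper's remedy is to point toward $C_j$ instead (which \emph{does} meet $F$). From $X(C_j)=X(C_i)$ one gets $\wall_1(C_j)=\wall_1(C_i)$ and a section $\sigma_{C_j}:Q_0\to\overline{C_j}\cap F$ of $\pi_{C_j}$; set $V_1(p)=\overrightarrow{p;\,\sigma_{C_j}(\pi_{C_i}(p))}$ along the $G_1$-fibers and let $V_2(p)=\overrightarrow{p;p'}$ be the inward $G_2$-field toward the reference fiber. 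For $t\gg 0$ the field $tV_1+V_2$ is directed to $C_j$; since Proposition~\ref{prop:charsep} gives $\Sep(C_j,C_j^\lor)\cap\wall_1(C_i)=\emptyset$, reversing only the fiber component yields $-tV_1+V_2$ directed to $C_j^\lor$. It is nowhere zero on $\overline{C_i}\cap F$ because $V_1$ never vanishes there (its target lies in $\overline{C_j}$, disjoint from $C_i$).

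In Case~1 a \emph{constant} $U_{G_2}$ need not exist. If two first-kind walls of $C_i$ are parallel and the $\wall_1(C_i)$-chamber containing $C_j^\lor$ lies (like $\widetilde{C_i}$) between them, while a third first-kind wall $H_0$ separates $\widetilde{C_i}$ from $C_j^\lor$, then the two parallel walls impose opposite sign conditions on the same component of $U_{G_2}$. (Such configurations occur once $\ell-1-\dim X(C_i)\geq 2$.) The paper proceeds quite differently: it fixes one $H_{i_0}\in\wall_1(C_i)\cap\Sep(C_i,C_j^\lor)$ and builds the field locally near each vertex $x_0$ of $\overline{C_i}\cap F$ via partition of unity. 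The geometric input, using that $F$ is near $\barH_\infty$, is that the line $Z=\bigcap\A_{x_0}$ satisfies $\barZ\cap\barH_\infty\subset\overline{C_i}\cap\barH_\infty\subset X(C_i)\subset\barH_{i_0}$, hence $Z\subset\{\alpha_{i_0}=\alpha_{i_0}(x_0)\}$. Consequently the local cone $\Gamma$ of $\A_{x_0}$ containing $C_j^\lor$ meets $\{\alpha_{i_0}>\alpha_{i_0}(x_0)\}$, so one may choose $U(x_0)\in\Gamma$ with $U\alpha_{i_0}>0$; the glued field is then directed to $C_j^\lor$ and has $U\alpha_{i_0}>0$ throughout $\overline{C_i}\cap F$, hence never vanishes there.
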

\begin{proof}

Let $C, D\in\bch^{\ell-1}(\A)$. Suppose 
$\dim X(D)\geq\dim X(C)$ and $C\neq D$. 
Then we will prove $\deg(C, D^\lor)=0$. 
The idea of 
the proof is to construct a vector field $U^{D^\lor}$ directed to $D^\lor$ 
on $F$ 
which is nowhere vanishing on a neighbourhood of 
$\barC\cap F\subset F$. 
Then by Proposition \ref{prop:degree0}, we have $\deg(C, D^\lor)=0$. 

We divide into three cases. 
\begin{itemize}
\item[(a)] 
$\dim X(C)=\ell-1$. 
\item[(b)] 
$\dim X(C)<\ell-1$ and $D$ is not inside of $\wall_1(C)$. 
\item[(c)] 
$\dim X(C)<\ell-1$ and $D$ is inside of $\wall_1(C)$. 
\end{itemize}

Firstly we consider the case (a). In this case, since $\dim X(D)\geq 
\dim X(C)$, we have $\dim X(D)=\ell-1$. 
Choose a point $p\in D\cap F$, and define the vector field 
$U$ on $F$ by 
\[
U(x)=\overrightarrow{x;p}\in T_xF. 
\]
Then the vector field is directed to $p$ and nowhere vanishing on 
$\barC\cap F$ (because $p\notin\barC$). 
By Corollary \ref{cor:sep}, $-U$ is a vector field directed to 
$D^\lor$, which is also nowhere vanishing on $\barC\cap F$. 
Hence $\deg(C, D^\lor)=0$. 

From now on, we assume $\dim X(C)<\ell-1$. If $D$ is inside of 
$\wall_1(C)$, then $X(D)\subset X(C)$ by Remark \ref{rkinside}, we 
have $\barA_{X(D)}\supset\barA_{X(C)}$. Proposition \ref{prop:cap} 
indicates $\Sep(D, D^\lor)\cap\barA_{X(C)}=\emptyset$. 
We can conclude that 
$D^\lor$ is also inside $\wall_1(C)$. Conversely, if $D$ is 
not inside of $\wall_1(C)$, then also $D^\lor$ is not inside $\wall_1(C)$. 

Next we consider the case (b). Then $\Sep(C, D^\lor)\cap\wall_1(C)
\neq\emptyset$. Choose a hyperplane $H_{i_0}\in\Sep(C, D^\lor)\cap\wall_1(C)$. 
Let $\alpha_{i_0}$ be the defining equation of $H_{i_0}$. Without loss of 
generality, we may assume that 
\[
\begin{split}
H_{i_0}^{+}&=\{\alpha_{i_0}>0\}\supset D^\lor\\
H_{i_0}^{-}&=\{\alpha_{i_0}<0\}\supset C. 
\end{split}
\]
We will construct a vector field $U^{D^\lor}$ on $F$ which 
is directed to $D^\lor$ and satisfying 
\begin{equation}
\label{eq:positivity}
U^{D^\lor}(x)\alpha_{i_0}>0, 
\end{equation}
for $x\in\barC\cap F$, where the left hand side of (\ref{eq:positivity}) is 
the derivative of $\alpha_{i_0}$ with respect to the vector field. 
In particular, we obtain a vector field directed to $D^\lor$ which 
is nowhere vanishing on $\barC\cap F$. It is enough to show that, at 
any point $x_0\in\barC$, there exists a local vector field around $x_0$ 
which satisfies (\ref{eq:positivity}). Then we will obtain a 
global vector field which satisfies (\ref{eq:positivity}) 
using partition of unity. 

It is sufficient to show the existence of such vector field 
around each vertex $x_0$ 
of $\barC\cap F$. By genericity of $F$, 
$Z:=\bigcap\A_{x_0}=
\bigcap_{x_0\in H\in\A}H$ is a $1$-dimensional flat of $\A$, 
which is transversal to $F$. By the assumption that $F$ does not separate 
$0$-dimensional flats of $\A$, we have 
\begin{equation}
\label{eq:inclusionZ}
\barZ\cap\barH_\infty
\subset\barC\cap\barH_\infty. 
\end{equation}
(See Figure \ref{fig:Z}.) 

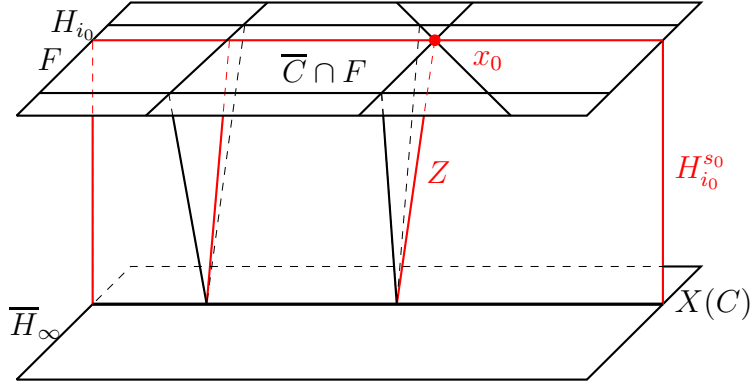
\begin{figure}[htbp]
\centering
\begin{tikzpicture}[scale=1]


\draw[thick] (0,0)--node[left]{$\overline{H}_{\infty}$}(1.5,1.5)--(9,1.5)--(7.5,0)--(0,0);


\filldraw[fill=white, draw=red, thick] (1,1)--(1,4.5)  -- 
(8.5,4.5) -- node [right, red] {$H_{i_0}^{s_0}$} (8.5,1) --(1,1);

\draw[red, thick] (2.8,4.5)--(2.5,1);

\draw[thick] (2,3.8) -- (2.5,1); 

\draw[thick] (4.8,3.8) -- (5,1); 

\draw[red, thick] (5.5,4.5) -- (5,1); 

\filldraw[fill=white, draw=black, thick] (0,3.5)--node [left] {$F$} 
(1.5,5)--(9,5)--(7.5,3.5)-- (0,3.5);

\draw[red, thick] (1,4.5)  -- (8.5,4.5); 
\draw[dashed, red, very thin] (1,1)--(1,4.5);
\draw[dashed, red, very thin] (2.8,4.5)--(2.5,1);

\draw[dashed, very thin] (0,0)--(1.5,1.5)--(9,1.5);

\draw[very thick] (1,1)--(8.5,1) node[right] {$X(C)$}; 

\draw[thick] (1.7,3.5)--(3.3,5);

\draw[thick] (1.2,4.7) node [left] {$H_{i_0}$} --(8.7,4.7);

\draw[thick] (0.3,3.8)-- node[above] {$\overline{C}\cap F$} (7.8,3.8);

\draw[thick] (4.5,3.5)--(6,5);

\draw[thick] (5,5)--(6.5,3.5);

\filldraw[fill=red, draw=red] (5.5,4.5) circle (2pt) ;
\draw (6.2,4.5) node[below, red]{$x_0$};

\draw[dashed, red, very thin] (5.5,4.5) -- node [right] {$Z$} (5,1); 

\draw[dashed, very thin] (4.8,3.8) -- (5,1);

\draw[dashed, very thin] (2,3.8) -- (2.5,1); 
\draw[dashed, very thin] (3,4.7) -- (2.5,1); 
\draw[dashed, very thin] (4.8,3.8) -- (5,1); 
\draw[dashed, very thin] (5.3,4.7) -- (5,1); 

\end{tikzpicture}
\caption{$Z$ and $H_{i_0}^{s_0}$.}
\label{fig:Z}
\end{figure}

Set $s_0:=\alpha_{i_0}(x_0)$ and $H_{i_0}^{s_0}=\{\alpha_{i_0}=s_0\}$ 
the hyperplane passing through $x_0$ which is parallel to $H_{i_0}$. 
Then we have $Z\subset H_{i_0}^{s_0}$, otherwise, contradicting 
(\ref{eq:inclusionZ}). The hyperplanes $\A_{x_0}=\A_Z$ determines 
chambers (cones), one of which, denoted by $\Gamma$, contains 
$D^\lor$ (Figure \ref{fig:gamma}). 
Hence the tangent vector $U^{D^\lor}(x_0)$ should be 
contained in $\Gamma$. Furthermore, 
\begin{equation}
D
\subset\Gamma\cap H_{i_0}^{+}
\subset\Gamma\cap H_{i_0}^{>s_0}. 
\end{equation}
In particular, we have 
$\Gamma\cap H_{i_0}^{>s_0}\neq\emptyset$. Thus we can construct 
a vector field $U^{D^\lor}$ around $x_0$ so that 
$U^{D^\lor}(x_0)\in \Gamma\cap H_{i_0}^{>s_0}$. 
Then (\ref{eq:positivity}) is satisfied around $x_0$. 
Hence we have $\deg(C, D^\lor)=0$ for the case (b). 

\begin{figure}[htbp]
\centering
\begin{tikzpicture}[scale=1]


\fill[green!20!white] (4,2) -- (4.707,1.293) arc (-45:45:1cm) --(4,2);
\draw[green!20!black] (4.8,1.5) node[right] {$\Gamma$};

\draw[thick] (0,0)--(8,0)--(9,4)--(1,4)node[left]{$F$}--(0,0);

\draw[thick] (0.25,1)--(8.25,1);
\draw[thick] (0.75,3)node[left]{$H_{i_0}$}--(8.75,3);

\draw[thick] (0.5,0.5)--(2,3.5);
\draw[thick] (2.5,0.5)--(5.5,3.5);
\draw[thick] (2.5,3.5)--(5.5,0.5);
\draw (2,1)node[above]{$\overline{C}\cap F$};

\draw[very thick, red] (0.5,2) node [left] {$H_{i_0}^{s_0}$} --(8.5,2);
\draw[->, thick, red] (1.75,2)--(2,2.5)node[right]{$H_{i_0}^{\geq s_0}$};

\draw[->,blue!50!black, very thick] (4,2) -- (5.5,2.5) node[right] {$U^{D^\lor}(x_0)$};

\filldraw[fill=black, draw=black] (4,2) circle (2pt) node[below]{$x_0$};

\end{tikzpicture}
\caption{Construction of the vector field $U^{D^\lor}$}
\label{fig:gamma}
\end{figure}
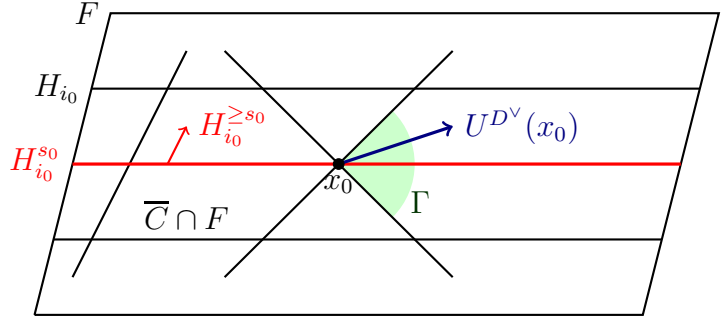

Thirdly, suppose $D$ is inside of $\wall_1(C)$, equivalently, 
$D\subset\widetilde{C}$. Let us handle the case (c). 
Since $X(D)\subset X(C)$ and $\dim X(D)\geq \dim X(C)$, we have 
$X(D)=X(C)$. In this case, $\wall_1(C)=\wall_1(D)$ and 
$\widetilde{C}=\widetilde{D}$. 
We consider the fibration 
$\pi_D:\barD\cap F\longmapsto Q_0$ 
which also has $d$-dimensional polytopes as fibers. 
Since the fiber is contractible, there exists a 
continuous section $\sigma_D:Q_0\longmapsto\barD\cap F$ 
such that $\pi_D\circ\sigma_D=\id_{Q_0}$. 

Now we construct a vector field. For each $p\in\barC\cap F$, 
we denote $G_2(p)$ the $(\ell-1-d)$-dimensional subspace 
which is passing through $p$ and parallel to $G_2(p_0)$ (Figure \ref{fig:V2}). 
Let $\{p'\}=G_2(p)\cap G_1(p_0)$. The tangent space is decomposed 
as $T_pF=T_pG_1(p)\oplus T_pG_2(p)$. We first construct a vector field 
on the second component. Let us define the tangent vector 
$V_2(p)\in T_pG_2(p)\subset T_pF$ by 
\begin{equation}
V_2(p)=\overrightarrow{p; p'}. 
\end{equation}
The vector field $V_2$ is obviously inward with respect to $\wall_1(C)$, 
and vanishing on the reference fiber $G_1(p_0)\cap \barC$. 

\begin{figure}[htbp]
\centering
\begin{tikzpicture}[scale=1.2]


\draw[thick] (0,0)--(8,0) node [right] {$F$} -- (8,4) -- 
(0,4)--cycle; 

\draw[thick] (0,1) -- (8,1); 
\draw[thick] (0,3) -- (8,3); 
\draw[thick] (0.5,0.5) -- (1.5,3.5); 
\draw[thick] (3.5,3.5)--(4.5,0.5);
\draw[thick] (3.5,0.5)--(5.5,3.5);
\draw[thick] (5.7,0.5)--(6.2,3.5);

\draw (2,1) node[above] {$C$};

\filldraw[fill=blue, draw=blue] (6.8,3) circle (2pt) ;
\filldraw[fill=blue, draw=blue] (7.2,1) circle (2pt) ;
\draw[thick, blue] (6.8,3)--(7,2)--node[right]{\small $Q_0$} (7.2,1);

\filldraw[fill=black, draw=black] (7,2) circle (2pt) ;
\draw (7.2,2) node [above, above, black] {\small $p_0$} ;

\draw[red, thick] (0,2)--(8,2) node[right] {$G_1(p_0)$}; 

\draw[thick, blue] (2.7, 3.5) node[above] {\small $G_2(p)$} --(3.3,0.5); 
\filldraw[fill=blue, draw=blue] (2.9,2.5) node [right] {$p$} circle (2pt) ;

\filldraw[fill=blue, draw=blue] (3,2) circle (2pt) ;
\draw (2.8,2) node[below, blue] {$p'$}; 

\foreach \x in {0.5,1,1.5,2,2.5,3.5,4}
\draw[->, blue, thick] (\x, 3)--(\x+0.1,2.5);

\foreach \x in {0.5,1,1.5,2.5,3.5,4}
\draw[->, blue, thick] (\x, 1)--(\x-0.1,1.5);

\foreach \x in {0.3,0.8,1.3,1.8,2.3,3.8}
\draw[->, blue, thick] (\x, 2.5)--(\x+0.05,2.25);

\draw[->, blue, thick] (2.9,2.5)--(2.95,2.25);

\foreach \x in {0.7,1.2,1.7,2.2,3.7}
\draw[->, blue, thick] (\x, 1.5)--(\x-0.05,1.75);

\end{tikzpicture}
\caption{$V_2$.}
\label{fig:V2}
\end{figure}

Next we construct a vector field $V_1$ along the fibers $G_1(p)$. 
Using the section $\sigma_C:Q_0\longrightarrow\barC\cap F$ 
(Remark \ref{rem:section}), 
define $V_1$ by 
\begin{equation}
V_1(p)=\overrightarrow{p;\sigma_D(\pi_C(p))}, 
\end{equation}
(Figure \ref{fig:V1}). 

\begin{figure}[htbp]
\centering
\begin{tikzpicture}[scale=1.2]


\draw[thick] (0,0)--(8,0) node [right] {$F$} -- (8,4) -- 
(0,4)--cycle; 

\draw[thick] (0,1) -- (8,1); 
\draw[thick] (0,3) -- (8,3); 
\draw[thick] (0.5,0.5) -- (1.5,3.5); 
\draw[thick] (2.5,3.5)--(3.5,0.5);
\draw[thick] (2.5,0.5)--(4.5,3.5);
\draw[thick] (6.2,0.5)--(6.7,3.5);

\draw (2,1) node[above] {$C$};
\draw (5,1) node[above] {$D$};

\draw[rounded corners=0.3cm, red, thick] (4,1)--(4,1.5)--
(4.5,2)--(5,2.5)node[right] {\small $\sigma_D(Q_0)$}--(4.8,3);

\filldraw[fill=red, draw=red] (6.8,3) circle (2pt) ;
\filldraw[fill=red, draw=red] (7.2,1) circle (2pt) ;
\draw[thick, red] (6.8,3)--(7,2)--node[right]{\small $Q_0$} (7.2,1);


\filldraw[fill=red, draw=red] (1,2) node[left, red] {\small $p$} circle (2pt); 
\draw[thin, red] (1,2)--(4.5,2); 
\filldraw[fill=red, draw=red] (4.5,2) node[right, red] {\small $p''$} circle (2pt); 

\foreach \y in {1.05,1.5,2,2.5,3.05}
\draw[->, red, thick] (1,\y)--(1.7,\y);

\foreach \y in {1.05,1.5,2,2.5,3.05}
\draw[->, red, thick] (2.5,\y)--(3.2,\y);

\end{tikzpicture}
\caption{$V_1, p''=\sigma_D(\pi_C(p))$.}
\label{fig:V1}
\end{figure}
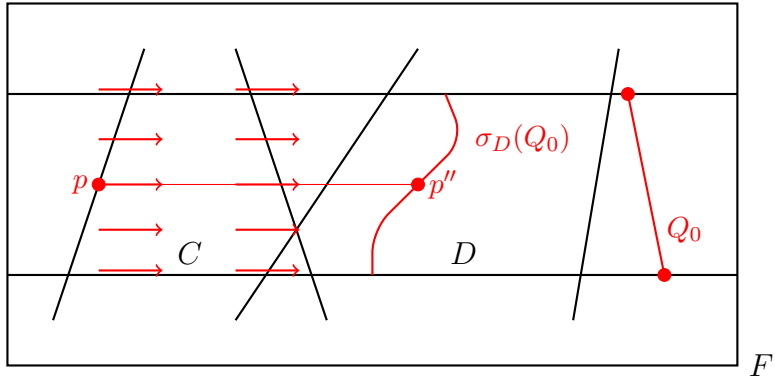

\begin{proposition}
For sufficiently large $t\gg 0$, the vector field $tV_1+V_2$ is 
directed to $D$. Similarly, $-tV_1+V_2$ is a vector field directed to 
$D^\lor$. 
\end{proposition}
\begin{proof}
Let $p\in H\in\wall_1(C)$. Recall that $D$ is inside $\wall_1(C)$. 
Since $V_2$ is inward and $V_1$ is tangent to $H$, the vector field 
$\pm tV_1+V_2$ is also 
inward. 
Let $H\in\wall_2(C)$ and $p\in H\cap F$. 
Then $V_1$ 
(resp. $-V_1$) is directed to $D$ (resp. $D^\lor$) with respect to $H$. 
Hence for sufficiently large $t$, $tV_1+V_2$ (resp. $-tV_1+V_2$) 
is directed to $D$ (resp. $D^\lor$). 
\end{proof}

Since $V_1$ is nowhere vanishing vector field on $\barC\cap F$, 
$-tV_1+V_2$ is a nowhere vanishing vector field around $\barC\cap F$ 
which is directed to $D^\lor$. 
Hence $\deg(C, D^\lor)=0$. 
This completes the proof of Theorem \ref{thm:triangular}. 
\end{proof}

\subsection{The degree formula}

\label{subsec:degree}

This section is devoted to prove the following. 

\begin{theorem}
\label{thm:degree}
Let $C\in\bch^{\ell-1}(\A)$. Suppose $\dim X(C)=d$. Then 
\begin{equation}
\deg(C, C^\lor)=(-1)^{\ell-1-d}. 
\end{equation}
\end{theorem}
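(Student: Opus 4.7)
The plan is to compute $\deg(C,C^\lor)$ by constructing an explicit vector field $U^{C^\lor}$ on $F=F^{\ell-1}$ directed to $C^\lor$ whose unique zero has local index $(-1)^{\ell-1-d}$. Since the degree of the Gauss map on $\partial(\overline{C}\cap B^{\ell-1})$ equals the sum of local indices of the interior zeros, this immediately yields the formula.

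The key idea is to reuse the construction from the proof of Theorem \ref{thm:triangular}, case (c), specialized to the degenerate situation $D=C$. Using the fibered structure $\pi_C\colon\overline{C}\cap F\to Q_0$ from \S \ref{subsec:fiber}, pick an interior point $p_0\in Q_0$, a section $\sigma_C$ as in Remark \ref{rem:section}, and set $q_0:=\sigma_C(p_0)$, an interior point of the fiber $\pi_C^{-1}(p_0)$. Define
\[
V_1(p):=\overrightarrow{p;\sigma_C(\pi_C(p))},\qquad V_2(p):=\overrightarrow{p;p'}\ \text{ with }\{p'\}=G_2(p)\cap G_1(p_0),
\]
exactly as in the cited proof, so $V_1$ lies along the fibers (vanishing precisely on $\sigma_C(Q_0)$) while $V_2$ lies in the base direction (vanishing precisely on $G_1(p_0)\cap\overline{C}$). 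Set $U^{C^\lor}:=-tV_1+V_2$ for $t\gg 0$.

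The directedness argument of case (c) carries over verbatim: $V_1$ is tangent to walls of $\wall_1(C)$ and $V_2$ is inward through them, while $-V_1$ is outward across walls of $\wall_2(C)$ with the contribution of $V_2$ absorbed for $t$ large; Proposition \ref{prop:cap} confirms that these are exactly the directions required for $U^{C^\lor}$ to be directed to $C^\lor$. The crucial difference from case (c) is that here $D=C$ forces $\sigma_C(Q_0)\subset\overline{C}\cap F$, so $V_1$ does vanish. An elementary verification shows that $U^{C^\lor}(p)=0$ forces simultaneously $V_2(p)=0$ (placing $p$ on the reference fiber $G_1(p_0)$) and $V_1(p)=0$ (placing $p$ on $\sigma_C(Q_0)$), whose unique intersection point is $q_0$. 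Since $\overline{C}\cap F$ is bounded for $C\in\bch^{\ell-1}(\A)$, the $\partial B^{\ell-1}$ condition is vacuous for $B^{\ell-1}$ large.

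In adapted coordinates $x=(x_1,x_2)\in G_1(p_0)\oplus G_2(p_0)$ centered at $p_0$, writing $\sigma_C(x_2)=(\sigma_C^{(1)}(x_2),x_2)$, a direct computation gives
\[
DU^{C^\lor}\big|_{q_0}=\begin{pmatrix} t\,I_d & -t\,D\sigma_C^{(1)}(p_0) \\ 0 & -I_{\ell-1-d}\end{pmatrix},
\]
which is block upper-triangular with determinant $t^d(-1)^{\ell-1-d}$, so the local index at $q_0$ equals $(-1)^{\ell-1-d}$ and the theorem follows. The main subtlety is that Remark \ref{rem:section} only provides a continuous section $\sigma_C$; this can be handled either by perturbing $\sigma_C$ near $p_0$ to a smooth section (which is possible since $q_0$ lies in the interior of the fiber and the fibration is polyhedral) or by appealing to homotopy invariance of the topological degree to legitimize the Jacobian computation.
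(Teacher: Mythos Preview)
Your proof is correct and follows essentially the same approach as the paper: both construct $-tV_1+V_2$ with identical $V_1,V_2$, argue directedness to $C^\lor$, and compute the degree as the index at the unique interior zero. The paper finishes by reducing to the model vector field $\sum_{i\le d}x_i\partial_{x_i}-\sum_{i>d}x_i\partial_{x_i}$ and quoting the Gauss map degree on $S^{\ell-2}$, whereas you compute the Jacobian sign directly---equivalent endpoints, and your handling of the continuity-of-$\sigma_C$ subtlety is if anything more careful than the paper's.
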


We construct a vector field around $\barC\cap F$ which is 
directed to $C^\lor$. The vector field $V_2$ is the same as 
in the previous section (\S \ref{subsec:uppertri}). 
Define the vector field $V_1$ along the fibers $\pi_C$ by 
\begin{equation}
V_1(p)=\overrightarrow{p;\sigma_C(\pi_C(p))}
\end{equation}
(see Figure \ref{fig:degreeformula}). 

\begin{figure}[htbp]
\centering
\begin{tikzpicture}[scale=1.2]


\draw[thick] (0,0)--(8,0) node [right] {$F$} -- (8,4) -- 
(0,4)--cycle; 

\draw[thick] (0,1) -- (8,1); 
\draw[thick] (0,3) -- (8,3); 
\draw[thick] (0.5,0.5) -- (1.5,3.5); 
\draw[thick] (3.5,3.5)--(4.5,0.5);
\draw[thick] (3.5,0.5)--(5.5,3.5);
\draw[thick] (6.2,0.5)--(6.7,3.5);

\draw (2,3) node[below] {$C$};

\draw[rounded corners=0.3cm, red, thick] (2,1)--(2,1.5)node[right] {\small $\sigma_C(Q_0)$}
--(2.5,2)--(3,2.5)--(2.8,3);

\filldraw[fill=red, draw=red] (6.8,3) circle (2pt) ;
\filldraw[fill=red, draw=red] (7.2,1) circle (2pt) ;
\draw[thick, red] (6.8,3)--(7,2)--node[right]{\small $Q_0$} (7.2,1);


\filldraw[fill=red, draw=red] (1,2) node[left, red] {\small $p$} circle (2pt); 
\filldraw[fill=red, draw=red] (2.5,2) node[right, red] {\small $p''$} circle (2pt); 

\foreach \y in {1.05,1.5,2,2.5,3.05}
\draw[->, red, thick] (1,\y)--(1.7,\y);

\foreach \y in {1.05,1.5,2,2.5,3.05}
\draw[->, red, thick] (4,\y)--(3.3,\y);

\end{tikzpicture}
\caption{$V_1, p''=\sigma_C(\pi_C(p))$.}
\label{fig:degreeformula}
\end{figure}
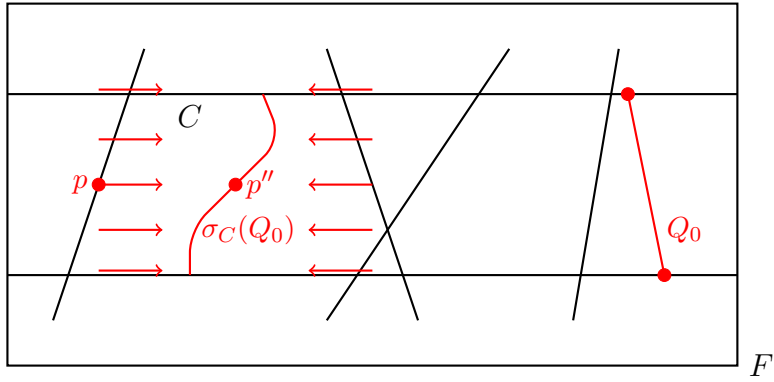

Then $tV_1+V_2$ is a vector field directed to $C$ (for $t\gg 0$). 
Since $C$ and $C^\lor$ are separated by $H\in\A\smallsetminus\wall_1(C)$, 
the vector field $-tV_1+V_2$ is directed to $C^\lor$. 
We can compute degree $\deg(C, C^\lor)$ using the vector field 
$-tV_1+V_2$. Note that $-tV_1(p)$ is outward vector field in along 
a $d$-dimensional space $G_1(p)$ and $V_2(p)$ is inward which is 
tangent to a $(\ell-1-d)$-dimensional space $G_2(p)$. 
Hence $\deg(C, C^\lor)$ is equal to the index of 
the following vector field in $\R^{\ell-1}$ at the origin. 
\begin{equation}
\label{eq:coord}
V=
\sum_{i=1}^d x_i\frac{\partial}{\partial x_i}-
\sum_{i=d+1}^{\ell-1} x_i\frac{\partial}{\partial x_i}, 
\end{equation}
where $d=\dim X(C)$. 
Recall that the de Rham cohomology group 
$H^{\ell-1}(S^{\ell-2})$ 
is generated by the differential form (\cite{BT}) 
\[
\sum_{i=1}^{\ell-1}(-1)^{i-1}x_idx_1\wedge\dots\wedge
\widehat{dx_i}\wedge\dots\wedge dx_{\ell-1}. 
\]
It is easily seen that the self map of 
$H^{\ell-1}(S^{\ell-2})$ 
induced by the Gauss map of the vector field (\ref{eq:coord}) is equal to 
the multiplication by $(-1)^{\ell-1-d}$. 
This completes the proof of Theorem \ref{thm:degree}.

\medskip

\noindent
{\bf Acknowledgements.} 
The present work was conducted during P. Bailet's stay at Hokkaido University 
as a postdoc. She thanks Postdoctoral Fellowship for Foreign Researchers 
(JSPS) for financial and other supports. 
M. Yoshinaga is partially supported by 
Grant-in-Aid for Scientific Research (C) 25400060 (JSPS).

\end{document}